\newtheorem{lemma}{Lemma}[section]
\newtheorem{thm}{Theorem}[section]
\newtheorem{prop}[lemma]{Proposition}
\newtheorem{cor}[lemma]{Corollary}
\theoremstyle{definition}
\newtheorem{exe}{Example}[section]
\numberwithin{equation}{section}
\title{Contradictory predictions with multiple agents }
\author{Stanis\l{}aw Cichomski\footnote{Faculty of Mathematics, Informatics and Mechanics, University of Warsaw (Poland). 
E-mail: s.cichomski@uw.edu.pl}  , \ Adam Os{\k e}kowski\footnote{Faculty of Mathematics, Informatics and Mechanics, University of Warsaw (Poland).  E-mail: A.Osekowski@mimuw.edu.pl}}
\begin{document}
\maketitle

\begin{abstract}
Let $X_1$, $X_2$, $\ldots$, $X_n$ be a sequence of coherent random variables, i.e., satisfying the equalities
$$ X_j=\mathbb{P}(A|\mathcal{G}_j),\qquad j=1,\,2,\,\ldots,\,n,$$
almost surely for some event $A$. The paper contains the proof of the estimate
$$\mathbb{P}\Big(\max_{1\le i < j\le n}|X_i-X_j|\ge \delta\Big) \leq  \frac{n(1-\delta)}{2-\delta} \wedge 1,$$
where $\delta\in (\frac{1}{2},1]$ is a given parameter. The inequality is sharp: for any $\delta$, the constant on the right cannot be replaced by any smaller number.  The argument rests on several novel combinatorial and symmetrization arguments, combined with dynamic programming. Our result generalizes the two-variate inequality of K. Burdzy and S. Pal  and in particular provides its alternative derivation.
 \end{abstract}

 \section{Introduction}
 
 Let $n$ be a positive integer. Following \cite{C1}, we say that a sequence $(X_1,\,X_2,\, \dots,\, X_n)$ of random variables on a given probability space $(\Omega,\mathcal{F},\mathbb{P})$ is  coherent, if there is a  sequence of sub-$\sigma$-fields $\mathcal{G}_1,\,\mathcal{G}_2,\,\dots,\, \mathcal{G}_n$ of $\mathcal{F}$ and an event $A \in \mathcal{F}$ such that
\begin{equation} \label{Cdef}X_j \ = \ \mathbb{P}(A|\mathcal{G}_j), \ \ \ \ \ j=1,\,2,\,\dots,\,n. \end{equation}
In such a case,  we write $(X_1, X_2,\dots ,X_n) \in \mathcal{C}$ and the joint distribution of the vector $(X_1,\,X_2,\,\ldots,\,X_n)$ is also  said to be coherent. This setup has a nice and transparent interpretation, which is important for many applications. Namely,  suppose that a group of $n$ experts provides their personal estimates on the likelihood of some random event $A$, and assume that the knowledge of $j$-th expert is represented by the $\sigma$-algebra $\mathcal{G}_j$, $j=1,\,2,\,\ldots,\,n$. Then the predictions $X_1$, $X_2$, $\ldots$, $X_n$ of the experts are given by \eqref{Cdef}. In general, there are three basic categories of problems which are studied in the above context, stemming from applications in statistics, decision theory, economics, game theory as well as probability and information theory. 

\smallskip

$\cdot$ \textit{Optimal combining} -- depending on purpose, to find an optimal procedure that combines  multiple coherent opinions in order to produce a better forecast; see \cite{C1, C2, C4, C3}.

\smallskip

$\cdot$ \textit{Bayesian persuasion} --  to compute (given a specific payoff function) how much one of the agents can benefit by selectively revealing parts of his information to other players, thus changing their beliefs and reactions; see \cite{B4, B2, B1, B3}.

\smallskip

$\cdot$ \textit{Maximal discrepancy} -- to provide sharp bounds on the maximal possible spread of coherent opinions. For instance, for a fixed functional $\Phi: [0,1]^n \rightarrow \mathbb{R}_+$, evaluate
$$\sup \mathbb{E}\Phi(X_1,X_2,\dots, X_n),$$
where the supremum is taken over all probability models as described above; see \cite{contra, pitman, EJP, BPC}.

\smallskip

The contribution of this paper concerns the last category. Our motivation comes from the following foundational result of K. Burdzy and S. Pal \cite{contra}. 

\begin{thm} \label{contr} For any  threshold $\delta \in (\frac{1}{2},1]$, we have
\begin{equation} \label{Eq2}\sup_{(X,Y)\in \mathcal{C}} \mathbb{P}(|X-Y|\ge \delta) = \frac{2(1-\delta)}{2-\delta}. \end{equation} \end{thm}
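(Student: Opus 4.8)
\textit{The plan} is to establish the two inequalities contained in \eqref{Eq2} separately. For ``$\ge$'' it is enough to exhibit a single coherent pair attaining the value $\tfrac{2(1-\delta)}{2-\delta}$; the inequality ``$\le$'', which is the real content, I would obtain through linear-programming duality over the set of coherent laws: producing one bounded ``test function'' whose expectation, against the moment identities built into coherence, dominates $\mathbbm{1}_{\{|X-Y|\ge\delta\}}$ deterministically.

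\textit{The extremal example.} Optimizing among pairs in which $X$ and $Y$ are each two-valued (so that $\Omega$ splits into at most four cells indexed by the two values of $X$ and of $Y$), imposing the coherence constraints, requiring both off-diagonal cells to be ``$\delta$-discrepant'', and pushing the configuration to the boundary, one is led to the following three-atom model. Put $\Omega=\{\omega_0,\omega_1,\omega_2\}$ with $\mathbb{P}(\omega_0)=\tfrac{\delta}{2-\delta}$ and $\mathbb{P}(\omega_1)=\mathbb{P}(\omega_2)=\tfrac{1-\delta}{2-\delta}$, let $A=\{\omega_1,\omega_2\}$, $\mathcal{G}_1=\sigma(\{\omega_1\})$, $\mathcal{G}_2=\sigma(\{\omega_2\})$. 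Then $X=\mathbb{P}(A\mid\mathcal{G}_1)$ equals $1$ at $\omega_1$ and $1-\delta$ at $\omega_0,\omega_2$, while $Y=\mathbb{P}(A\mid\mathcal{G}_2)$ equals $1$ at $\omega_2$ and $1-\delta$ at $\omega_0,\omega_1$; thus $|X-Y|=\delta$ on $A$ and $|X-Y|=0$ on $\{\omega_0\}$, so $\mathbb{P}(|X-Y|\ge\delta)=\mathbb{P}(A)=\tfrac{2(1-\delta)}{2-\delta}$.

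\textit{The dual certificate.} Set $Z=\mathbbm{1}_A$, so that $X=\mathbb{E}[Z\mid\mathcal{G}_1]$ and $Y=\mathbb{E}[Z\mid\mathcal{G}_2]$; then $\mathbb{E}[(Z-X)\,u(X)]=0$ and $\mathbb{E}[(Z-Y)\,u(Y)]=0$ for every bounded Borel $u$, since $u(X)$ is $\mathcal{G}_1$-measurable and $u(Y)$ is $\mathcal{G}_2$-measurable. It therefore suffices to find a bounded $u\colon[0,1]\to\mathbb{R}$ with
$$ \mathbbm{1}_{\{|x-y|\ge\delta\}}\ \le\ \frac{2(1-\delta)}{2-\delta}+u(x)(z-x)+u(y)(z-y)\qquad\text{for all }x,y\in[0,1],\ z\in\{0,1\}, $$
for then substituting $(x,y,z)=(X,Y,Z)$ and integrating gives $\mathbb{P}(|X-Y|\ge\delta)\le\tfrac{2(1-\delta)}{2-\delta}$ at once. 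The symmetry $(x,y,z)\mapsto(1-x,1-y,1-z)$ makes it natural to take $u$ odd about $\tfrac12$ (which turns the $z=0$ inequality into a mirror image of the $z=1$ one, halving the work), and matching the tightness pattern of the example pins down $u(1-\delta)=\tfrac1{2-\delta}$, hence $u(\delta)=-\tfrac1{2-\delta}$; the simplest completion is then $u\equiv\tfrac1{2-\delta}$ on $[0,1-\delta]$, $u\equiv0$ on $(1-\delta,\delta)$, $u\equiv-\tfrac1{2-\delta}$ on $[\delta,1]$. Writing $F(x):=u(x)(1-x)$, the $z=1$ inequality reads $\mathbbm{1}_{\{|x-y|\ge\delta\}}\le\tfrac{2(1-\delta)}{2-\delta}+F(x)+F(y)$, and it follows from two elementary facts: $F\ge-\tfrac{1-\delta}{2-\delta}$ on $[0,1]$, so $F(x)+F(y)\ge-\tfrac{2(1-\delta)}{2-\delta}$ always; and whenever $|x-y|\ge\delta$ one of $x,y$ lies in $[0,1-\delta]$ and the other in $[\delta,1]$, so $F(x)+F(y)=\tfrac{|x-y|}{2-\delta}\ge\tfrac{\delta}{2-\delta}=1-\tfrac{2(1-\delta)}{2-\delta}$.

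\textit{The main obstacle} is locating $u$: ``$\le$'' amounts to an infinite-dimensional linear program and there is no mechanical route to its optimal dual solution, so one must first guess the extremal primal configuration, read off where the dual inequality has to be an equality, and interpolate $u$ respecting the oddness normalization. Once $u$ is fixed the verification is the short case analysis above. I would also note that ``$\le$'' is exactly the $n=2$ instance of the estimate $\tfrac{n(1-\delta)}{2-\delta}\wedge 1$ announced in the abstract --- the truncation being inactive because $\tfrac{2(1-\delta)}{2-\delta}\le\tfrac23$ for $\delta>\tfrac12$ --- so it can alternatively be deduced from the paper's main theorem.
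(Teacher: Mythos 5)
Your proof is correct, and it takes a genuinely different route from the paper. The paper obtains Theorem \ref{contr} as the $n=2$ instance of Theorem \ref{contrN}, whose proof proceeds by discretization (Proposition \ref{DISCR}), a randomized symmetrization that centers $\mathbb{P}(A)$ at $\tfrac12$ and enforces a reflection constraint (Proposition \ref{SYM7} and Corollary \ref{reduction}), a translation into a purely analytic optimization over step-function pairs $(H,L)\in\Lambda^\delta(n)$ (Propositions \ref{LAMB}--\ref{LAMBdelta}), a forest-graph encoding of the constraint $4.$, and finally a Bellman-function computation (Lemma \ref{centre}). Your argument instead uses linear-programming duality: the coherence identities $\mathbb{E}[(Z-X)u(X)]=0$, $\mathbb{E}[(Z-Y)u(Y)]=0$ (valid for any bounded Borel $u$, since $X=\mathbb{E}[Z\mid\mathcal{G}_1]$, $Y=\mathbb{E}[Z\mid\mathcal{G}_2]$) reduce the upper bound to a pointwise inequality in $(x,y,z)\in[0,1]^2\times\{0,1\}$, and you exhibit an explicit odd-about-$\tfrac12$ step function $u$ certifying it; I checked the case analysis and the arithmetic ($F\ge-\tfrac{1-\delta}{2-\delta}$ and $F(x)+F(y)=\tfrac{|x-y|}{2-\delta}\ge\tfrac{\delta}{2-\delta}$ when $|x-y|\ge\delta$), and it is sound, as is the three-atom extremizer, whose conditional probabilities work out exactly to $1$ and $1-\delta$. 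Your route is shorter and self-contained for $n=2$ — precisely the kind of alternative proof the introduction says was sought after \cite{contra,pitman} — while the paper's heavier machinery is what makes the passage to general $n$ possible; a dual certificate for $\max_{i<j}|x_i-x_j|$ with $n\ge3$ would need $n$ test functions and is not pinned down by the same tightness-and-oddness heuristics, so the two approaches are genuinely complementary rather than one subsuming the other.
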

 
In the language of applications, Theorem \ref{contr}  establishes a sharp upper bound for the probability that two experts, with access to different information sources, will deliver highly incongruent (or contradictory)  opinions.  The original proof of equality (\ref{Eq2}) is remarkably complex and rather difficult: an explicit optimizer is obtained by a series of consecutive reductions and simplifications. As pointed out in \cite{pitman}, finding a simpler proof of this result would be highly desirable. Another natural and important question concerns the extension of the threshold bound (\ref{Eq2}) to $n > 2$ coherent opinions. Our main result in this paper is as follows, we use the notation $a\wedge b$ for the minimum of the numbers $a$ and $b$.

\begin{thm} \label{contrN} For any  threshold $\delta \in (\frac{1}{2},1]$ and  every integer $n\ge 2$, we have
\begin{equation} \label{EqN} \sup_{(X_1,X_2,\dots, X_n)\in \mathcal{C}} \mathbb{P}\Big(\max_{1\le i < j\le n}|X_i-X_j|\ge \delta\Big) = \frac{n(1-\delta)}{2-\delta} \wedge 1.\end{equation} \end{thm}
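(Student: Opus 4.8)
\medskip

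\noindent\emph{Proof strategy.} The identity \eqref{EqN} will be obtained by proving the two matching bounds separately. Throughout one writes $Z=\mathbf{1}_A$ and uses that coherence is equivalent, up to relabelling the $\sigma$-fields, to the existence of a joint law of $(X_1,\dots,X_n,Z)$ with $Z\in\{0,1\}$ and $\mathbb{E}[Z\mid X_j]=X_j$ for every $j$ (given such a law, take $\mathcal{G}_j=\sigma(X_j)$ and $A=\{Z=1\}$). A routine weak-$*$ compactness argument shows that the supremum in \eqref{EqN} is attained, and that it suffices to consider models on a finite probability space in which each $X_j$ takes finitely many values; since only the event $\{\max_{1\le i<j\le n}|X_i-X_j|\ge\delta\}=\{\max_i X_i-\min_i X_i\ge\delta\}$ is relevant, one may discretize the admissible values further. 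Note the two regimes: $\frac{n(1-\delta)}{2-\delta}\le 1$ exactly when $\delta\ge\frac{n-2}{n-1}$, equivalently $n\le\frac{2-\delta}{1-\delta}$.

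For the \textbf{lower bound} I would first handle the unsaturated regime $\delta\ge\frac{n-2}{n-1}$. The case $n=2$ is the Burdzy--Pal optimizer: three atoms $\omega_1,\omega_2,\omega_3$ of masses $\frac{1-\delta}{2-\delta},\frac{\delta}{2-\delta},\frac{1-\delta}{2-\delta}$, with $A=\{\omega_1,\omega_3\}$ and the two experts read off from the partitions $\{\{\omega_1,\omega_2\},\{\omega_3\}\}$ and $\{\{\omega_1\},\{\omega_2,\omega_3\}\}$; one computes $(X_1,X_2)=(1-\delta,1),(1-\delta,1-\delta),(1,1-\delta)$ on the three atoms, so $|X_1-X_2|=\delta$ precisely on $A$, which has mass $\frac{2(1-\delta)}{2-\delta}$. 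The plan is to grow this model by introducing experts one at a time, each new expert being responsible for one further atom (or block) on which it disagrees with an already-present expert by exactly $\delta$, so that the discrepancy set gains mass $\frac{1-\delta}{2-\delta}$ at every step. The delicate point is that the obvious symmetric generalization (each expert certain on its own atom and moderate elsewhere) only reaches $\frac{n(1-\delta)}{n-(n-1)\delta}<\frac{n(1-\delta)}{2-\delta}$ for $n\ge 3$, so the construction must be carried out in an asymmetric, layered fashion, matching the structure of the extremal models identified in the upper-bound analysis. In the saturated regime $\delta<\frac{n-2}{n-1}$ one instead builds, for each $\varepsilon>0$, a coherent model with $\mathbb{P}(\max_i X_i-\min_i X_i\ge\delta)\ge 1-\varepsilon$: a ``packing'' construction in which the events $\{X_j\le 1-\delta\}$ nearly cover $A$ while the events $\{X_j\ge\delta\}$ nearly cover $A^c$, feasible exactly because $n$ exceeds $\frac{2-\delta}{1-\delta}$.

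The \textbf{upper bound} $\mathbb{P}(\max_i X_i-\min_i X_i\ge\delta)\le\frac{n(1-\delta)}{2-\delta}$ is the heart of the matter, and where I expect the real obstacle. A naive union bound over pairs $\{i,j\}$, or over the sets $\{X_j\ge\delta\}$ and $\{X_j\le 1-\delta\}$, overshoots by a factor of order $n$: realizing a spread of size $\delta$ needs only \emph{one} expert above $\delta$ and \emph{one} below $1-\delta$, and the whole difficulty is to account for this cancellation. I would proceed in two stages. Stage one: apply symmetrization and rearrangement arguments on an optimal finite model --- averaging over permutations of the experts, merging atoms with identical expert-profiles, and proving monotonicity of the optimal value under certain mass transfers --- to reduce the problem to a canonical layered family of models described by a small number of parameters (essentially the masses and common values on successive layers together with $q=\mathbb{P}(A)$). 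Stage two: set up a dynamic program that assembles the model incrementally, with a state recording only the running conditional probability of $A$ and the current spread, and a Bellman recursion whose value function is shown by induction to be at most $\frac{k(1-\delta)}{2-\delta}\wedge 1$ after $k$ experts have been incorporated; the inductive step reduces to a single one-step inequality. The combinatorial core --- establishing in stage one that an optimal model may be taken in the layered normal form, so that the low-dimensional dynamic program of stage two is exact rather than merely an upper estimate --- is the step I expect to be hardest.

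Finally, specializing the whole argument to $n=2$ yields \eqref{Eq2}: the first nontrivial instance of the dynamic program, together with the matching three-atom construction above, constitutes a self-contained derivation of the Burdzy--Pal theorem, fulfilling the aim stated in the introduction.
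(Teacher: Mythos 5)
Your proposal is an outline, not a proof: the two steps you yourself flag as the real work---the reduction of an optimal finite model to a ``canonical layered normal form'' (stage one of the upper bound) and the explicit asymmetric lower-bound construction for $n\ge 3$---are not carried out, and each is where essentially all the difficulty lives. The paper does not follow your route. Rather than rearranging an optimal model into a layered normal form and then running a dynamic program indexed by experts with state ``running conditional probability of $A$ plus current spread,'' it first performs a specific symmetrization (mix $X$ with $1-X$ via an independent fair coin $U$, giving $\mathbb{P}(\widetilde A)=\tfrac12$ and the balance identity $\frac{1-x}{x}\sum_i\mathbb{P}(\widetilde X_i=x,\widetilde A)=\sum_i\mathbb{P}(\widetilde X_i=1-x,\widetilde A)$), then forgets the individual experts entirely and passes to an \emph{analytic} optimization over pairs of step functions $(H,L)$ on $\mathbb{R}_+$ subject to the mass constraint $\lambda(H>\tfrac12)+\lambda(L<\tfrac12)\le n/2$ and the same balance identity. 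The dynamic program in the paper is over a forest/tree structure built from $(H,L)$, with scalar state $x=H$ on the current node and Bellman recursion $\Phi(x)=\frac{1-x}{x}\bigl(1+\sup_{y\ge 1-x+\delta}\Phi(y)\bigr)$; your proposed state (conditional probability of $A$ and spread) is a different object and it is not clear that the induction on the number of experts you describe would close, since the event $\{\max_i X_i-\min_i X_i\ge\delta\}$ is not monotone in $k$ in any obvious way that decouples into a one-step inequality.

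Concretely, the gaps are: (i) you do not prove, or even describe precisely, the normal-form reduction that would make your low-dimensional DP exact rather than a one-sided estimate; (ii) your lower bound for general $n$ is only described qualitatively (``grow the model one expert at a time, gaining mass $\frac{1-\delta}{2-\delta}$ per step''), whereas the paper gives a closed-form cyclic construction on $2(n+1)$ atoms $\{A_0,\dots,A_n,B_0,\dots,B_n\}$ with $\sigma$-fields $\mathcal{F}_i=\sigma(A_i,B_i,(A\cup B_{i+1})\setminus A_{i+1},(B\cup A_{i+1})\setminus B_{i+1})$---this is not an incremental or layered construction, and an incremental approach would still need to verify coherence and compute the spread at each step, which you do not do. Your $n=2$ construction and the observation that the naive symmetric generalization undershoots are both correct, and your identification of where the difficulty lies is accurate; but as written the proposal does not establish either direction of \eqref{EqN}.
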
 

Correspondingly, Theorem \ref{contrN} expands the range of applications from two experts to multiple agents scenario. Quite unexpectedly (at least to the authors), the threshold bound (\ref{EqN}) reveals an almost linear dependence between the examined quantities and the number of coherent random variables. The proof of (\ref{EqN}) that we present below is completely independent from the reasoning in \cite{contra} and hence can be regarded as an alternative demonstration of (\ref{Eq2}). Moreover, our approach does not refer in any significant way to the particular choice of integer $n$. 

The above statement should also be compared to its version concerning the maximal spread of expectations. For any pair $(X,Y)$ of coherent random variables we have the sharp estimate
$$ \mathbb E |X-Y|\leq \frac{1}{2}$$
(see e.g. \cite{pitman}). The paper \cite{EJP} contains the extension of this result to the case of an arbitrary number of variables.

\begin{thm}
Under the above notation, we have
\begin{equation}\label{obj}
 \sup_{(X_1,X_2,\dots, X_n)\in \mathcal{C}} \  \mathbb{E}  \max_{1\le i<j\le n}  |X_i-X_j|=\begin{cases}
\frac{1}{2} & \mbox{if }n=2,\smallskip\\
2-\sqrt{2} & \mbox{if }n=3,\smallskip\\
\frac{7}{2}-2\sqrt{2} & \mbox{if }n=4,\smallskip\\
\displaystyle \frac{n-2}{n-1} & \mbox{if }n\geq 5.
\end{cases}
\end{equation}
\end{thm}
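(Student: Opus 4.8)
The plan rests on a clean reformulation of coherence. A vector $(X_1,\dots,X_n)$ lies in $\mathcal C$ if and only if there is a $\{0,1\}$-valued random variable $Y$ with $\mathbb E[Y\mid X_j]=X_j$ for each $j$: take $Y=\mathbf 1_A$ for the ``only if'' part, and $\mathcal G_j=\sigma(X_j)$, $A=\{Y=1\}$ for the converse. Moreover $\max_{1\le i<j\le n}|X_i-X_j|$ equals the range $R:=\max_i X_i-\min_i X_i$, the largest pairwise gap of real numbers being realised by the extreme ones. So \eqref{obj} amounts to computing $\sup\mathbb E R$ over all laws of $(X_1,\dots,X_n,Y)$ on $[0,1]^n\times\{0,1\}$ satisfying the $n$ calibration identities $\mathbb E[(Y-X_j)h(X_j)]=0$ for all bounded $h$. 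This feasible set is convex and weak-$*$ closed, and it is invariant both under permuting the coordinates and under the flip $(X_j,Y)\mapsto(1-X_j,1-Y)$; since $R$ is invariant under both operations, averaging an optimiser over these symmetries lets me assume the extremiser is exchangeable and symmetric about $\tfrac12$, in particular with $\mathbb P(A)=\tfrac12$. Note in passing that $\{X_j=0\}\subseteq\{Y=0\}$ and $\{X_j=1\}\subseteq\{Y=1\}$, so an atom of positive probability cannot carry $R=1$; the supremum is therefore not attained.

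For the upper bound I would run a Lagrangian/duality argument. For any bounded $g\colon[0,1]\to\mathbb R$ the calibration identities give $\mathbb E R=\mathbb E\big[R+\sum_{j=1}^n(Y-X_j)g(X_j)\big]$, whence
$$\mathbb E R\ \le\ \sup_{x\in[0,1]^n,\ y\in\{0,1\}}\Big(\max_i x_i-\min_i x_i+\sum_{j=1}^n(y-x_j)g(x_j)\Big).$$
The right side is deterministic and collapses drastically: fixing $m=\min_i x_i\le M=\max_i x_i$, each interior coordinate should be placed at the maximiser over $[m,M]$ of $t\mapsto(1-t)g(t)$ (when $y=1$) or of $t\mapsto -tg(t)$ (when $y=0$), so at the optimum the coordinates take at most three values $m$, $M$ and a common interior value. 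Choosing $g$ odd about $\tfrac12$ makes the $y=0$ and $y=1$ cases identical, and the whole problem becomes: minimise over such $g$ the quantity
$$\sup_{0\le m\le M\le1}\Big((M-m)+(1-m)g(m)+(1-M)g(M)+(n-2)\max_{t\in[m,M]}(1-t)g(t)\Big).$$
I would then produce, for each $n$, an explicit certificate $g=g^{(n)}$ for which this value equals the claimed constant. I expect $g^{(n)}$ to be of a simple piecewise shape (roughly constant near the endpoints with a short linear or quadratic piece in the middle), the dichotomy $n\le4$ versus $n\ge5$ appearing as a switch in which constraint in the supremum becomes active, and the quadratic critical-point conditions for $n=3,4$ producing the values $2-\sqrt2$ and $\tfrac72-2\sqrt2$; for $n\ge5$ the certificate should be essentially affine and yield the tidy bound $\tfrac{n-2}{n-1}$. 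For $n=2$ the bound $\mathbb E R\le\tfrac12$ is already available.

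For the matching lower bound I would write down explicit coherent families. For $n\ge5$ this is a finitely supported model on which $R$ equals a value tending to $\tfrac{n-2}{n-1}$ with probability tending to $1$; the vanishing ``defect'' mass is precisely what permits the identities $\mathbb E[Y\mid X_j]=X_j$ to persist while the remaining coordinates are driven towards $0$ and $1$. For $n=2$ the optimiser is the trivial pair $X_1=\mathbf 1_A$, $X_2\equiv\tfrac12$ with $\mathbb P(A)=\tfrac12$, and for $n=3,4$ — the cases where the generic construction is beaten — one needs genuinely finer, $\sqrt2$-tuned configurations designed to make the deterministic supremum above tight. Matching these constructions against the dual certificates gives \eqref{obj}.

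The hard part is the second step: guessing $g^{(n)}$ correctly and carrying out the elementary-but-delicate verification that the resulting deterministic supremum has exactly the stated value, especially for $n=3,4$ where the extremiser is not the generic one and the optimisation bifurcates. A secondary technical nuisance is making fully rigorous the reduction of that supremum to finitely many candidate coordinate values — handling ties between the interior value and the endpoints and the boundary cases $m=0$, $M=1$, $m=M$ — together with the symmetrisation and compactness preliminaries.
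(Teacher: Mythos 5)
This theorem is quoted in the paper from reference \cite{EJP}; the paper itself contains no proof of it, so there is no in-paper argument to compare your proposal against.

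Taken on its own terms, the proposal is a plausible plan but not a proof. The setup is correct: the characterisation of coherence as the existence of a $\{0,1\}$-valued $Y$ with $\mathbb{E}[Y\mid X_j]=X_j$, the reduction to the range $R=\max_i X_i-\min_i X_i$, the symmetrisation to $\mathbb{P}(A)=\tfrac12$ and exchangeability (via mixing over the symmetry group with an auxiliary independent label, then invoking the tower property), and the dual bound $\mathbb{E}R\le\sup_{x,y}\big(\max_i x_i-\min_i x_i+\sum_j(y-x_j)g(x_j)\big)$ are all sound, as is the observation that an odd-about-$\tfrac12$ multiplier $g$ merges the $y=0$ and $y=1$ branches. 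However, the entire computational core is missing. You exhibit no certificate $g^{(n)}$ for any $n$; you do not make rigorous the reduction of the deterministic supremum to a three-value configuration, nor resolve the boundary cases you yourself flag ($m=M$, endpoint ties, $m=0$, $M=1$); you do not solve the resulting optimisation over $(m,M)$ that is supposed to yield $2-\sqrt2$, $\tfrac72-2\sqrt2$ and $\tfrac{n-2}{n-1}$; and you give no explicit coherent families achieving the lower bounds, describing the decisive $n=3,4$ cases only as ``$\sqrt2$-tuned configurations'' without specifying them. The part of the argument that actually produces and certifies the piecewise formula---the part you identify as the hard step---is entirely absent, so the claim remains unestablished.
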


It might be a little unexpected that the above upper bound involves four different formulas depending on the value of $n$, while \eqref{EqN} is formulated with a single expression. We would also like to mention that the linearity of the expectation makes the proof of \eqref{obj} a bit simpler; the successful treatment of the tail inequality will require a wider class of arguments.

Let us say a few words about our approach and the organization of the remaining part of the  paper. 
In the next section we  apply a special symmetrization technique, which reduces the problem of calculating the left-hand side of (\ref{EqN}) to the combinatorial  optimization over specific objects of geometrical nature. This approach seems to be especially beneficial due to certain convenient symmetry constraints it enforces. Then, in Section 3, using dynamic programming arguments, we solve the previously obtained optimization problem. This boils down to the derivation of a suitable Bellman function and some further reductions of the problem. This  appearance of dynamic programming is not  surprising: as evidenced in numerous papers, the Bellman function method is  a powerful tool used widely in martingale theory and harmonic analysis to obtain sharp inequalities -- see e.g. \cite{D2, D3, D1} and consult the references therein.  
We strongly believe that our argumentation can be pushed further and successfully applied in the further study of coherent distributions.

\section{Basic reductions and symmetrizations}

Throughout, we assume that $n\geq 2$ is a fixed integer and $\delta\in (\frac{1}{2},1]$ is a given threshold. 
We begin with the standard discretization, which will later allow us to pass to various combinatorial and optimization arguments. Let $\mathcal{C}(n,m)$ be the family of all  vectors $X = (X_1,\,X_2,\,\ldots,\,X_n) \in \mathcal{C}$ such that each $X_j$ takes at most $m$ different values, $j=1,\,2,\,\ldots,\,n$. 

\begin{prop}\label{DISCR} To prove the threshold bound (\ref{EqN}), it is enough to verify that
\begin{equation} \sup_{\substack{m  \in \{1,2,\dots\} \\ X \in\mathcal{C}(n,m)}} \mathbb{P}\Big(\max_{1\le i < j\le n}|X_i-X_j|\ge \delta\Big) \ =  \ \frac{n(1-\delta)}{2-\delta} \wedge 1. \label{EqNM} \end{equation}
\end{prop}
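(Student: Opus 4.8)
The statement claims that to prove Theorem 1.3, it suffices to prove the supremum formula restricted to coherent vectors whose coordinates are finitely-valued. Since the restricted supremum is taken over a subset of all coherent vectors, one inequality is immediate: the left-hand side of \eqref{EqNM} is at most the left-hand side of \eqref{EqN}. Moreover, the reverse inequality for the full supremum — that the left-hand side of \eqref{EqN} is at most $\frac{n(1-\delta)}{2-\delta}\wedge 1$ — would follow immediately from \eqref{EqNM} once we know every coherent vector can be approximated (in the relevant sense) by finitely-valued coherent vectors. So the real content is: the supremum over $\mathcal{C}$ is not increased by restricting to $\bigcup_m \mathcal{C}(n,m)$, i.e. $\sup_{\mathcal{C}} \mathbb{P}(\cdots) = \sup_{m, X\in\mathcal{C}(n,m)} \mathbb{P}(\cdots)$.

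Here is the plan. Fix a coherent vector $(X_1,\dots,X_n) = (\mathbb{P}(A\mid\mathcal{G}_1),\dots,\mathbb{P}(A\mid\mathcal{G}_n))$ and a number $t < \mathbb{P}(\max_{i<j}|X_i-X_j|\ge\delta)$; I must produce, for a suitable $m$, a vector in $\mathcal{C}(n,m)$ whose corresponding probability exceeds $t$. First I would pass to a finite $\sigma$-field: replace each $\mathcal{G}_j$ by a finite sub-$\sigma$-field $\mathcal{G}_j' \subseteq \mathcal{G}_j$ and $A$ by approximations, chosen so that $X_j' := \mathbb{P}(A\mid\mathcal{G}_j')$ is close to $X_j$ in $L^1$ (hence, along a subsequence, almost surely) — this is the standard fact that $\sigma$-fields are approximable from below by finite ones and that conditional expectations are continuous under such approximation (martingale convergence / density of simple functions). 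The vector $(X_1',\dots,X_n')$ is coherent by construction, and each $X_j'$ takes finitely many values (it is $\mathcal{G}_j'$-measurable with $\mathcal{G}_j'$ finite), so $(X_1',\dots,X_n')\in\mathcal{C}(n,m)$ for $m$ = the maximal number of atoms. The point is that the event $\{\max_{i<j}|X_i-X_j|\ge\delta\}$ is, up to the approximation error, captured by $\{\max_{i<j}|X_i'-X_j'| \ge \delta'\}$ for $\delta'$ slightly below $\delta$; and since \eqref{EqNM} is assumed to hold for \emph{every} threshold in $(\frac12,1]$ — in particular for $\delta'$ — and $\frac{n(1-\delta')}{2-\delta'}\wedge 1 \to \frac{n(1-\delta)}{2-\delta}\wedge 1$ as $\delta'\uparrow\delta$, we can absorb the loss.

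Concretely, the steps in order: (1) reduce \eqref{EqN} to showing $\sup_{\mathcal{C}}\mathbb{P}(\cdots)\le \frac{n(1-\delta)}{2-\delta}\wedge 1$, the matching lower bound being a separate (later) construction which automatically lands in some $\mathcal{C}(n,m)$; (2) given $\varepsilon>0$ and $X\in\mathcal{C}$, choose $\delta'=\delta-\eta\in(\frac12,\delta)$ with $\frac{n(1-\delta')}{2-\delta'}\wedge1 < \frac{n(1-\delta)}{2-\delta}\wedge1 + \varepsilon$; (3) approximate the $\mathcal{G}_j$ by finite $\mathcal{G}_j'$ and $A$ within $\mathcal{G}_j'$ well enough that $\|X_j'-X_j\|_\infty$-type or $L^1$-then-a.s. control gives $\{\max_{i<j}|X_i-X_j|\ge\delta\}\subseteq \{\max_{i<j}|X_i'-X_j'|\ge\delta'\}$ up to a set of measure $<\varepsilon$; (4) apply \eqref{EqNM} at threshold $\delta'$ to $X'\in\mathcal{C}(n,m)$ to get $\mathbb{P}(\max_{i<j}|X_i-X_j|\ge\delta) \le \frac{n(1-\delta')}{2-\delta'}\wedge1 + \varepsilon < \frac{n(1-\delta)}{2-\delta}\wedge1 + 2\varepsilon$; (5) let $\varepsilon\to0$.

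The main obstacle is step (3): making the approximation of $A$ and of the $\mathcal{G}_j$ simultaneously fine enough that the conditional probabilities $X_j'$ track $X_j$ closely on the (possibly small) event where the max-discrepancy is large. The cleanest route is probably to first replace $A$ by a set $A'$ in the (finite) $\sigma$-field generated by simple approximations, controlling $\mathbb{P}(A\triangle A')$, and then note $\|\mathbb{P}(A\mid\mathcal{G}_j') - \mathbb{P}(A'\mid\mathcal{G}_j')\|_{L^1}\le \mathbb{P}(A\triangle A')$, while $\mathbb{P}(A\mid\mathcal{G}_j')\to\mathbb{P}(A\mid\mathcal{G}_j)$ in $L^1$ along an increasing sequence of finite $\sigma$-fields by the martingale convergence theorem; combining and passing to an a.s.-convergent subsequence converts the $L^1$ bounds into the pointwise containment of events needed above. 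One should also double-check the $m=1$ edge case (deterministic coordinates), where both sides are trivial. I would keep this argument brief, since it is routine measure theory; the substance of the paper lies in establishing \eqref{EqNM} itself.
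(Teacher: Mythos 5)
Your strategy is correct and has the same architecture as the paper's: fix $X\in\mathcal{C}$, produce a finitely-valued coherent approximation, transfer the tail estimate via a relaxed threshold $\delta'<\delta$, apply \eqref{EqNM} at $\delta'$, and let the slack vanish, using continuity of $\delta\mapsto \frac{n(1-\delta)}{2-\delta}\wedge 1$. Where you differ is in how you produce the approximant. The paper invokes a cited lemma (from \cite{contra, mastersthesis}) which provides, for each $m$, a vector $X^{(m)}\in\mathcal{C}(n,m)$ with the \emph{uniform} bound $|X_j-X_j^{(m)}|\le 1/m$ almost surely; with such $L^\infty$ control the event containment $\{\max|X_i-X_j|\ge\delta\}\subseteq\{\max|X_i^{(m)}-X_j^{(m)}|\ge\delta-2/m\}$ holds exactly, so the proof is two lines: triangle inequality and $m\to\infty$. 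Your route via $L^1$ martingale convergence plus an almost-surely convergent subsequence is heavier and weaker --- it only yields containment of events up to a set of measure $<\varepsilon$, forcing a second limiting parameter. Moreover, the detour through an approximating event $A'$ is unnecessary: keep $A$ fixed and only shrink the conditioning $\sigma$-fields; $X_j':=\mathbb{P}(A\,|\,\mathcal{G}_j')$ is automatically finitely valued whenever $\mathcal{G}_j'$ is finite, whether or not $A$ lies in $\mathcal{G}_j'$. In fact the $L^\infty$ bound the paper cites is elementary to obtain directly: taking $\mathcal{G}_j':=\sigma\big(\lfloor m X_j\rfloor/m\big)\subseteq\mathcal{G}_j$, the tower property gives $\mathbb{P}(A\,|\,\mathcal{G}_j')=\mathbb{E}(X_j\,|\,\mathcal{G}_j')$, which differs from $X_j$ by less than $1/m$ on every atom and takes at most $m+1$ values. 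That is exactly what the cited lemma buys: it eliminates both the subsequence step and the $\varepsilon$-slack in measure from your outline.
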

\begin{proof} Assume that (\ref{EqNM}) holds and fix any $n$-variate vector $X\in \mathcal{C}$. Let $m$ be a positive integer with $\delta > \frac{2}{m}+\frac{1}{2}$. As shown in \cite{contra, mastersthesis}, there exists a vector $X^{(m)}\in \mathcal{C}(n,m)$ such that $|X_j-X_j^{(m)}|\le \frac{1}{m}$ almost surely for all indices $j=1,\,2,\,\dots, \,n$.
Thus, by the triangle inequality, we have 
$$\mathbb{P}\Big(\max_{1\le i < j\le n}|X_i-X_j|\ge \delta\Big) \  \le  \ \mathbb{P}\Big(\max_{1\le i < j\le n}|X_i^{(m)}-X_j^{(m)}|\ge \delta-\frac{2}{m}  \Big) \ \le \ \frac{n\Big(1-(\delta-\frac{2}{m})\Big)}{2-(\delta-\frac{2}{m})},$$
where the second inequality follows from (\ref{EqNM}). Passing with $m$ to infinity ends the proof.
\end{proof}
 
Later on, we will need the following structural fact.
 
\begin{lemma}\label{lemat} 
Assume that $\{G_1,G_2, \dots, G_m\}$ is a finite partition of \ $\Omega$, let $A\in \mathcal{F}$ be an arbitrary event and put $Y=\mathbb{E}(\mathbbm{1}_A|\sigma(G_1,G_2,\ldots,G_m))$. Then for any $y\in (0,1]$ such that $\mathbb{P}(Y=y)>0$, we have
$$
\mathbb{P}(\{Y=y\}\cap A^c) \  =  \ \mathbb{P}(\{Y=y\}\cap A)\cdot \frac{1-y}{y}.
$$
\end{lemma}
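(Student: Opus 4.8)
The plan is to exploit the defining property of conditional expectation on the atoms of the finite $\sigma$-algebra $\sigma(G_1,G_2,\ldots,G_m)$. First I would note that since $\{G_1,\ldots,G_m\}$ is a finite partition of $\Omega$, the random variable $Y=\mathbb{E}(\mathbbm{1}_A\mid\sigma(G_1,\ldots,G_m))$ is constant on each $G_k$; denote its value there by $y_k=\mathbb{P}(A\cap G_k)/\mathbb{P}(G_k)$ (with the usual convention on null sets). Hence the event $\{Y=y\}$ is a union of some of the $G_k$'s, namely $\{Y=y\}=\bigcup_{k\in K}G_k$ where $K=\{k:y_k=y\}$.

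Next I would compute both sides by summing over $k\in K$. On the one hand, $\mathbb{P}(\{Y=y\}\cap A)=\sum_{k\in K}\mathbb{P}(A\cap G_k)=\sum_{k\in K}y_k\,\mathbb{P}(G_k)=y\sum_{k\in K}\mathbb{P}(G_k)=y\,\mathbb{P}(Y=y)$, using $y_k=y$ for $k\in K$. On the other hand, $\mathbb{P}(\{Y=y\}\cap A^c)=\sum_{k\in K}\mathbb{P}(A^c\cap G_k)=\sum_{k\in K}(1-y_k)\mathbb{P}(G_k)=(1-y)\,\mathbb{P}(Y=y)$. Combining the two identities and dividing by $y>0$ yields
$$\mathbb{P}(\{Y=y\}\cap A^c)=(1-y)\,\mathbb{P}(Y=y)=\mathbb{P}(\{Y=y\}\cap A)\cdot\frac{1-y}{y},$$
which is exactly the claim.

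There is essentially no hard part here — the statement is really just the observation that conditioning on $\{Y=y\}$ gives $\mathbb{P}(A\mid Y=y)=y$, rewritten as a statement about $A^c$. The only points requiring a little care are the null-set conventions in the definition of $y_k$ (harmless, since we may discard any $G_k$ of probability zero) and the hypothesis $\mathbb{P}(Y=y)>0$, which guarantees $K\neq\emptyset$ and makes the division by $y$ and the passage through $\mathbb{P}(Y=y)$ legitimate. Alternatively, one can avoid the atom decomposition altogether: since $\{Y=y\}\in\sigma(G_1,\ldots,G_m)$, the definition of conditional expectation gives $\mathbb{E}(\mathbbm{1}_A\mathbbm{1}_{\{Y=y\}})=\mathbb{E}(Y\mathbbm{1}_{\{Y=y\}})=y\,\mathbb{P}(Y=y)$ directly, and then $\mathbb{P}(\{Y=y\}\cap A^c)=\mathbb{P}(Y=y)-\mathbb{P}(\{Y=y\}\cap A)=(1-y)\mathbb{P}(Y=y)$, finishing as above.
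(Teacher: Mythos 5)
Your proof is correct and essentially matches the paper's: the paper works directly with the set $G=\{Y=y\}\in\sigma(G_1,\ldots,G_m)$ and the identity $y=\mathbb{P}(A\cap G)/\mathbb{P}(G)$, which is precisely the shorter ``alternatively'' argument you give at the end; your atom-by-atom decomposition is just a slightly more explicit version of the same computation.
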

\begin{proof} This is straightforward. For any $G\in \sigma(G_1,G_2,\ldots,G_m)$  such that $\mathbb{E}(\mathbbm{1}_A|G)=y$, we write
 $$y = \frac{\mathbb{P}(A\cap G)}{\mathbb{P}(G)}=\frac{\mathbb{P}(A\cap G)}{\mathbb{P}(A\cap G)+\mathbb{P}(A^c\cap G)}.$$
 This is equivalent to $y\cdot \Big(\mathbb{P}(A\cap G)+\mathbb{P}(A^c\cap G)\Big) = \mathbb{P}(A\cap G)$, or $\mathbb{P}(A^c\cap G)=\frac{1-y}{y}\cdot\mathbb{P}(A\cap G).$ It remains to take $G=\{Y=y\}$; we have $G\in \sigma(G_1,G_2,\ldots,G_m)$, since $Y$ is measurable with respect to the latter $\sigma$-algebra. 
\end{proof}
 
Now we will describe a useful symmetrization procedure, which  allow us to replace the left-hand side of \eqref{EqNM} with a more regular expression (see Corollary \ref{reduction} below). We need some additional notation. Fix a positive integer $m$ and let $X\in \mathcal{C}(n,m)$ be a coherent  vector with $X_i=\mathbb E(\mathbbm{1}_A|\mathcal{G}_i)$, $i=1,\,2,\,\ldots,\,n$. Let $U$ be a random variable independent of $\mathcal{G}_1,\,\mathcal{G}_2,\,\ldots,\,\mathcal{G}_n$ and $A$, having the two-point distribution $\mathbb{P}(U=0)=\mathbb{P}(U=1)=1/2$. Then  $\widetilde{X}$, the mixture of vectors $X$ and $1-X$, is given by
$$( \widetilde{X}_1, \widetilde{X}_2, \dots, \widetilde{X}_n) \ = \ U \cdot (X_1, X_2, \dots, X_n) + (1 - U) \cdot (1 - X_1, 1 - X_2, \dots, 1 - X_n).$$
Furthermore, we define the mixture $\widetilde{A}$ of $A$ and $A^c$ by the requirement $\mathbbm{1}_{\widetilde{A}}=\widetilde{\mathbbm{1}_A}$, or more explicitly, $\widetilde{A} = (A \cap \{U = 1\})\cup  (A^c \cap \{U = 0\})$. Let us distinguish the $\sigma$-algebras $\widetilde{\mathcal{G}}_i=\sigma(\mathcal{G}_i, U)$, $i=1,\,2,\,\ldots,\, n$. The key properties of these objects are summarized in a statement below.

\begin{prop} \label{SYM7} 
Under the above notation, the following holds true.

(i) We have $\mathbb{P}(\widetilde{A})=\frac{1}{2}$, $\widetilde{X}\in \mathcal{C}(n, 2m)$ and $\widetilde{X}_i=\mathbb{E}(\mathbbm{1}_{\widetilde{A}}|\mathcal{\widetilde{G}}_i)$ for all $i$.

(ii) For any sequence $(x_i)_{i=1}^n\subset [0,1]$ we have the identity
$$\mathbb{P}\Big(\bigcap_{i=1}^{n}\{\widetilde{X}_i=x_i\} \cap \widetilde{A} \Big) \ = \ \mathbb{P}\Big(\bigcap_{i=1}^{n}\{\widetilde{X}_i=1-x_i\} \cap \widetilde{A}^c\Big). $$

(iii) For any $x \in (0,1]$,
$$ \frac{1-x}{x}\cdot \sum_{i=1}^n \mathbb{P}\Big(\{\widetilde{X}_i=x\}\cap \widetilde{A}\Big) \ = \ \sum_{i=1}^n \mathbb{P}\Big(\{\widetilde{X}_i=1-x\}\cap \widetilde{A}\Big). $$

(iv)  We have the equality
$$ \mathbb{P}\Big(\max_{1\le i < j\le n}|X_i-X_j|\ge \delta\Big) \ = 2\cdot \mathbb{P}\Big( \Big\{\max_{1\le i < j\le n}|\widetilde{X}_i-\widetilde{X}_j|\ge \delta\Big\} \cap \widetilde{A}  \Big). $$
\end{prop}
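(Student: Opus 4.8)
The plan is to verify the four assertions essentially by unwinding the definitions, exploiting the independence of $U$ from the pair $(A,(\mathcal{G}_i)_i)$ and the obvious symmetry $U\leftrightarrow 1-U$. For (i), I would first observe that $\widetilde{\mathcal{G}}_i=\sigma(\mathcal{G}_i,U)$ and, since $U$ is independent of $\sigma(\mathcal{G}_i)$ and of $A$, the conditional expectation $\mathbb{E}(\mathbbm{1}_{\widetilde A}\mid\widetilde{\mathcal{G}}_i)$ splits on the two atoms $\{U=1\}$ and $\{U=0\}$: on $\{U=1\}$ it equals $\mathbb{E}(\mathbbm{1}_A\mid\mathcal{G}_i)=X_i$, and on $\{U=0\}$ it equals $\mathbb{E}(\mathbbm{1}_{A^c}\mid\mathcal{G}_i)=1-X_i$; this is exactly $\widetilde X_i$ by the definition of the mixture. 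Hence $\widetilde X\in\mathcal{C}$, and since each $\widetilde X_i$ takes at most $2m$ values (the $m$ values of $X_i$ together with their reflections), we get $\widetilde X\in\mathcal{C}(n,2m)$. Finally $\mathbb{P}(\widetilde A)=\tfrac12\mathbb{P}(A)+\tfrac12\mathbb{P}(A^c)=\tfrac12$ by independence.

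For (ii), I would use the defining formula $\widetilde A=(A\cap\{U=1\})\cup(A^c\cap\{U=0\})$ together with the fact that on $\{U=1\}$ one has $\widetilde X_i=X_i$ while on $\{U=0\}$ one has $\widetilde X_i=1-X_i$. Thus the event $\bigcap_i\{\widetilde X_i=x_i\}\cap\widetilde A$ decomposes as $\big(\bigcap_i\{X_i=x_i\}\cap A\cap\{U=1\}\big)\cup\big(\bigcap_i\{1-X_i=x_i\}\cap A^c\cap\{U=0\}\big)$, and the mirror event $\bigcap_i\{\widetilde X_i=1-x_i\}\cap\widetilde A^c$ decomposes symmetrically with the roles of $\{U=0\}$ and $\{U=1\}$ interchanged; using $\mathbb{P}(U=0)=\mathbb{P}(U=1)=\tfrac12$ and independence, the two probabilities match term by term. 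Assertion (iii) then follows from (ii): first note $\{\widetilde X_i=x\}\cap\widetilde A$ and $\{\widetilde X_i=1-x\}\cap\widetilde A^c$ have equal probability (the one-variable case of (ii)), and then apply Lemma \ref{lemat} to $Y=\widetilde X_i$, $A=\widetilde A$ — legitimate by (i), since $\widetilde X_i=\mathbb{E}(\mathbbm{1}_{\widetilde A}\mid\widetilde{\mathcal{G}}_i)$ and $\widetilde{\mathcal{G}}_i$ is generated by a finite partition — to get $\mathbb{P}(\{\widetilde X_i=x\}\cap\widetilde A^c)=\tfrac{1-x}{x}\mathbb{P}(\{\widetilde X_i=x\}\cap\widetilde A)$; summing over $i$ and combining gives the claim.

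For (iv), the point is that the event $\{\max_{i<j}|\widetilde X_i-\widetilde X_j|\ge\delta\}$ is invariant under the reflection $x\mapsto 1-x$ applied simultaneously to all coordinates, because $|(1-a)-(1-b)|=|a-b|$; hence on $\{U=1\}$ it coincides with $\{\max_{i<j}|X_i-X_j|\ge\delta\}$ and on $\{U=0\}$ it also coincides with the same event written in the $X$'s. So $\mathbb{P}(\max_{i<j}|\widetilde X_i-\widetilde X_j|\ge\delta)=\mathbb{P}(\max_{i<j}|X_i-X_j|\ge\delta)$. It remains to see that the intersection with $\widetilde A$ contributes exactly half: by the simultaneous-reflection symmetry together with the symmetry exchanging $\widetilde A$ and $\widetilde A^c$ (which is exactly the content of (ii), or directly the $U\leftrightarrow 1-U$ symmetry), the events $\{\max_{i<j}|\widetilde X_i-\widetilde X_j|\ge\delta\}\cap\widetilde A$ and $\{\max_{i<j}|\widetilde X_i-\widetilde X_j|\ge\delta\}\cap\widetilde A^c$ have the same probability, and their union is the full event $\{\max_{i<j}|\widetilde X_i-\widetilde X_j|\ge\delta\}$; dividing by two yields (iv). I expect the only mildly delicate point to be the bookkeeping in (ii) — making sure the decomposition into the $\{U=0\}$ and $\{U=1\}$ pieces is matched up correctly and that the finiteness hypotheses of Lemma \ref{lemat} are genuinely met for $\widetilde X_i$ — but none of this should be more than routine once the reflection/independence symmetries are set up cleanly.
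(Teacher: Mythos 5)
Your proof is correct and follows essentially the same route as the paper: (i) by conditioning on the atoms of $U$, (ii) by decomposing over $\{U=0\}$, $\{U=1\}$ and using independence, (iii) by combining Lemma~\ref{lemat} with the one-variable case of (ii), and (iv) by the reflection-invariance of the maximal gap together with the symmetry between $\widetilde A$ and $\widetilde A^c$. The one small imprecision is your claim that $\widetilde{\mathcal{G}}_i$ is generated by a finite partition — $\mathcal{G}_i$ itself need not be finite; the correct (and standard) fix is to apply Lemma~\ref{lemat} with $\sigma(\widetilde X_i)$ in place of $\widetilde{\mathcal{G}}_i$, which is legitimate since $\widetilde X_i = \mathbb{E}(\mathbbm{1}_{\widetilde A}\mid\sigma(\widetilde X_i))$ and $\widetilde X_i$ takes finitely many values.
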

\begin{proof} Since $U$ is measurable with respect to $\widetilde{\mathcal{G}}_i$ and
independent of $A$, we obtain
$$\mathbb{E}\big(\mathbbm{1}_{\widetilde{A}}|\mathcal{\widetilde{G}}_i\big) = \mathbbm{1}_{\{U=1\}}\mathbb{E}\big(\mathbbm{1}_A | \mathcal{\widetilde{G}}_i\big) + \mathbbm{1}_{\{U=0\}}\mathbb{E}\big(\mathbbm{1}_{A^c} | \mathcal{\widetilde{G}}_i\big) = UX_i+(1-U)(1-X_i)$$
and
$$\mathbb{P}(\widetilde{A})=\mathbb{E}\Big[\mathbb{E}\big(\mathbbm{1}_{\widetilde{A}}|\mathcal{\widetilde{G}}_i\big)\Big]=\mathbb{E}\big[UX_i+(1-U)(1-X_i)\big]=\frac{1}{2}$$
for all $i$. It remains to note that since $U\in \{0,1\}$, the set of all values  attained by $\widetilde{X}_i$ has at most $2m$ elements; this gives  (i). To show (ii), observe that 
\begin{align*}
\mathbb{P}\Big(\bigcap_{i=1}^{n}\{\widetilde{X}_i=x_i\} \cap \widetilde{A} \Big)
&=\mathbb{P}\Big(\bigcap_{i=1}^{n}\{\widetilde{X}_i=x_i\} \cap \widetilde{A}\cap \{U=0\} \Big)+
\mathbb{P}\Big(\bigcap_{i=1}^{n}\{\widetilde{X}_i=x_i\} \cap \widetilde{A} \cap \{U=1\}\Big)\\
&=\mathbb{P}\Big(\bigcap_{i=1}^{n}\{X_i=1-x_i\} \cap {A}^c\cap \{U=0\} \Big)+
\mathbb{P}\Big(\bigcap_{i=1}^{n}\{{X}_i=x_i\} \cap A \cap \{U=1\}\Big).
\end{align*}
Since $U$ is independent of $X_i$'s and $A$, and satisfies $\mathbb{P}(U=0)=\mathbb{P}(U=1)=1/2$, the above expression is equal to
\begin{align*}
&\mathbb{P}\Big(\bigcap_{i=1}^{n}\{X_i=1-x_i\} \cap {A}^c\cap \{U=1\} \Big)+
\mathbb{P}\Big(\bigcap_{i=1}^{n}\{{X}_i=x_i\} \cap A \cap \{U=0\}\Big)\\
&=\mathbb{P}\Big(\bigcap_{i=1}^{n}\{\widetilde{X}_i=1-x_i\} \cap \widetilde{A}^c\cap \{U=1\} \Big)+
\mathbb{P}\Big(\bigcap_{i=1}^{n}\{\widetilde{X}_i=1-x_i\} \cap \widetilde{A}^c \cap \{U=0\}\Big)\\
&=\mathbb{P}\Big(\bigcap_{i=1}^{n}\{\widetilde{X}_i=1-x_i\} \cap \widetilde{A}^c\Big),
\end{align*}
so (ii) is established. To prove the third part, fix  $x\in (0,1]$ and write
\begin{align*} \frac{1-x}{x}\cdot \sum_{i=1}^n \mathbb{P}\Big(\{\widetilde{X}_i=x\}\cap \widetilde{A}\Big)  \ \ &= \ \ \sum_{i=1}^n \mathbb{P}\Big(\{\widetilde{X}_i=x\}\cap \widetilde{A}^c\Big)= \  \sum_{i=1}^n \mathbb{P}\Big(\{\widetilde{X}_i=1-x\}\cap \widetilde{A}\Big),\end{align*}
where the first equality is due to the Lemma \ref{lemat} and the second is a consequence of (ii).  Finally, fix $m \in \{1, 2,\dots\}$, $X \in \mathcal{C}(n,m)$ and notice that 
$$\max_{1\le i<j\le n}|X_i-X_j| \ = \ \max_{1\le i < j \le n} |(1-X_i)-(1-X_j)|$$
almost surely. Hence we deduce (iv)  from
\begin{align*}\mathbb{P}\Big(\max_{1\le i < j\le n}|X_i-X_j|\ge \delta\Big)  \ \ &= \ \ \mathbb{P}\Big(\max_{1\le i < j\le n}|\widetilde{X}_i-\widetilde{X}_j|\ge \delta\Big) = \ 2\cdot \mathbb{P}\Big( \Big\{\max_{1\le i < j\le n}|\widetilde{X}_i-\widetilde{X}_j|\ge \delta\Big\} \cap \widetilde{A}  \Big),\end{align*}
as desired.
\end{proof}

As a direct consequence, we have the following crucial reduction.

\begin{cor}\label{reduction}
We have the inequality
\begin{equation}\label{red-iden}
 \sup_{\substack{m\in \{1,2,\dots\} \\ X \in\mathcal{C}(n,m)}} \mathbb{P}\Big(\max_{1\le i < j\le n}|X_i-X_j|\ge \delta\Big) \ \le  \  2\cdot \sup_{\substack{m  \in \{1,2,\dots\} \\ X \in\mathcal{C}'(n,m)}} \mathbb{P}\Big( \Big\{\max_{1\le i < j\le n}|X_i-X_j|\ge \delta\Big\} \cap A  \Big), 
\end{equation}
where $\mathcal{C}'(n,m)$ is the subset of all those $X \in C(n,m)$ that satisfy $\mathbb{P}(A)=\frac{1}{2}$ and 
\begin{equation}\label{A-Ac}
 \frac{1-x}{x}\cdot \sum_{i=1}^n \mathbb{P}\Big(\{{X}_i=x\}\cap {A}\Big) \ = \ \sum_{i=1}^n \mathbb{P}\Big(\{ {X}_i=1-x\}\cap  {A}\Big)\qquad \mbox{ for all }x\in (0,1]. 
\end{equation}
\end{cor}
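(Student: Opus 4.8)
The plan is simple: the symmetrization $X\mapsto\widetilde{X}$ constructed just before Proposition \ref{SYM7} carries an arbitrary element of $\mathcal{C}(n,m)$ into $\mathcal{C}'(n,2m)$, and at the same time rewrites the probability on the left of \eqref{red-iden} as twice the probability appearing on the right; the asserted inequality then drops out by taking suprema.

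First I would fix $m\in\{1,2,\dots\}$ and an arbitrary $X=(X_1,X_2,\dots,X_n)\in\mathcal{C}(n,m)$, together with a representing event $A$ and $\sigma$-fields $\mathcal{G}_i$ satisfying $X_i=\mathbb{E}(\mathbbm{1}_A|\mathcal{G}_i)$. Applying the construction preceding Proposition \ref{SYM7} yields the vector $\widetilde{X}$, the event $\widetilde{A}$, and the $\sigma$-fields $\widetilde{\mathcal{G}}_i=\sigma(\mathcal{G}_i,U)$. By part (i) of that proposition, $\widetilde{X}\in\mathcal{C}(n,2m)$, $\mathbb{P}(\widetilde{A})=\tfrac12$, and $\widetilde{X}_i=\mathbb{E}(\mathbbm{1}_{\widetilde{A}}|\widetilde{\mathcal{G}}_i)$ for every $i$; and by part (iii) the pair $(\widetilde{X},\widetilde{A})$ satisfies the balance relation \eqref{A-Ac}. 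Hence $\widetilde{X}$, with representing event $\widetilde{A}$, belongs to $\mathcal{C}'(n,2m)$ and is an admissible competitor for the supremum on the right-hand side of \eqref{red-iden}.

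Next I would invoke part (iv) of Proposition \ref{SYM7}, which gives
\begin{align*}
\mathbb{P}\Big(\max_{1\le i<j\le n}|X_i-X_j|\ge\delta\Big)
&=2\cdot\mathbb{P}\Big(\Big\{\max_{1\le i<j\le n}|\widetilde{X}_i-\widetilde{X}_j|\ge\delta\Big\}\cap\widetilde{A}\Big)\\
&\le 2\cdot\sup_{\substack{m'\in\{1,2,\dots\}\\ X'\in\mathcal{C}'(n,m')}}\mathbb{P}\Big(\Big\{\max_{1\le i<j\le n}|X'_i-X'_j|\ge\delta\Big\}\cap A\Big),
\end{align*}
where the last step uses that $\widetilde{X}\in\mathcal{C}'(n,2m)$. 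Since the left-hand side no longer refers to the symmetrization, I may finally take the supremum over all $m$ and all $X\in\mathcal{C}(n,m)$, obtaining precisely \eqref{red-iden}.

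There is no genuine obstacle here — the argument is pure bookkeeping on top of Proposition \ref{SYM7}. The only mild point to keep track of is that the symmetrization doubles the number of attained values, passing from $m$ to $2m$, which is harmless because the supremum on the right of \eqref{red-iden} is taken over all positive integers $m$. It is also worth noting that membership in $\mathcal{C}'(n,m)$ is really a property of a coherent vector paired with a choice of representing event; the construction produces exactly such an event, namely $\widetilde{A}$, so no ambiguity arises.
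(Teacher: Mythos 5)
Your argument is exactly the paper's: use Proposition~\ref{SYM7}~(i) and (iii) to see that the symmetrized vector $\widetilde{X}$ belongs to $\mathcal{C}'(n,2m)$, then part (iv) to convert the left-hand probability into twice the restricted probability for $\widetilde{X}$, and take suprema. The only difference is that you spell out the bookkeeping that the paper compresses into one line; the content is identical and correct.
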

\begin{proof}
By Proposition \ref{SYM7} (iii) and (iv), the left-hand side of \eqref{red-iden} does not exceed the right-hand side. 
\end{proof}

It will later become clear that (\ref{red-iden}) is in fact an equality. As for now, the above argumentation allows us to reduce our main problem to the identification of 
\begin{equation} \label{sup-new}\sup_{\substack{m  \in \{1,2,\dots\} \\ X \in\mathcal{C}'(n,m)}} \mathbb{P}\Big( \Big\{\max_{1\le i < j\le n}|X_i-X_j|\ge \delta\Big\} \cap A  \Big).\end{equation}
The advantage over the original formulation (appearing on the left-hand side of \eqref{red-iden}) lies in the fact that we study the behavior of $X$ restricted to the set $A$. As we will see, the analysis of this expression can be performed in a purely analytic setup, with the use of combinatorial arguments. Consider the measure space $(\mathbb{R}_+,\mathcal{B}(\mathbb{R}_+),\lambda)$, where $\lambda$ stands for the Lebesgue measure. For $k\in\{1,2,\dots\}$, denote by $\Lambda(k)$ the family of all those functions $(H,L):\mathbb{R}_+ \to [0,1]^2$, which satisfy the following four requirements:
\begin{enumerate}
\item $L(x) \le \frac{1}{2} \le H(x)$ for all $x\in\mathbb{R}_+$, 
\item $H$ and $L$ are right-continuous step functions with a finite number of steps,
\item $\lambda(H>\frac{1}{2}) + \lambda(L<\frac{1}{2}) \le  \frac{k}{2},$
\item for any $y\in (0,1]$ we have
$$\frac{1-y}{y}\cdot \Big(\lambda(H=y) + \lambda(L=y) \Big) \ = \ \lambda(H=1-y) + \lambda(L=1-y).$$
\end{enumerate}

Here is a key statement, which links the above probabilistic considerations with the analytic context we have just introduced.

\begin{prop} \label{LAMB} The value of (\ref{sup-new}) is not bigger than 
\begin{equation} \label{sup-new-HL}  \sup_{(H,L) \in \Lambda(n)} \lambda\big( H\ge L+\delta   \big). \end{equation} \end{prop}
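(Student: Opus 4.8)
The plan is to take an arbitrary $X\in\mathcal{C}'(n,m)$ realizing (approximately) the supremum in \eqref{sup-new}, and to manufacture from its conditional law on $A$ a pair of step functions $(H,L)\in\Lambda(n)$ for which $\lambda(H\ge L+\delta)$ equals (or dominates) the probability $\mathbb{P}(\{\max_{i<j}|X_i-X_j|\ge\delta\}\cap A)$. The guiding idea is that, on the event $A$, the relevant statistic $\max_{i<j}|X_i-X_j|$ equals $\max_i X_i-\min_i X_i$, so the only data we need to retain about the vector $X$ restricted to $A$ are the running maximum $H:=\max_i X_i$ and the running minimum $L:=\min_i X_i$; everything else about the joint law is irrelevant to the quantity we are bounding. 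We will encode the distribution of $(\max_i X_i,\min_i X_i)$ on $A$ as a push-forward onto $\mathbb{R}_+$ and exhibit $H,L$ as quantile-type functions of this measure, scaled so that Lebesgue measure on $\mathbb{R}_+$ plays the role of $\mathbb{P}(\cdot\cap A)$.

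Concretely, first I would observe that $\mathbb{P}(A)=\tfrac12$ for $X\in\mathcal{C}'(n,m)$, and that $\mathbb{P}(\cdot\cap A)$ restricted to the (finite) atoms generated by $(X_1,\dots,X_n)$ is a finite atomic measure of total mass $\tfrac12$. Enumerate those atoms; on each atom the vector $X$ is constant, hence so are $\max_i X_i=:h$ and $\min_i X_i=:\ell$, with $\ell\le h$ (and one checks $\ell\le\tfrac12\le h$ is \emph{not} automatic, but we can split/relabel — see below). Lay the atoms out as consecutive intervals in $\mathbb{R}_+$: to an atom $G$ of mass $\mathbb{P}(G\cap A)=p_G$ associate an interval of length $2p_G$ (so the total length used is $2\cdot\tfrac12=1$), and on that interval set $H\equiv h_G$, $L\equiv\ell_G$. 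Then $H,L$ are right-continuous step functions with finitely many steps (requirement 2), and $\lambda(H\ge L+\delta)=2\,\mathbb{P}(\{h-\ell\ge\delta\}\cap A)=2\,\mathbb{P}(\{\max_{i<j}|X_i-X_j|\ge\delta\}\cap A)$. Wait — this introduces a factor $2$ that is not present in \eqref{sup-new-HL}; the fix is to use intervals of length $p_G$ (total length $\tfrac12$) on $\mathbb{R}_+$, so that $\lambda$ literally reproduces $\mathbb{P}(\cdot\cap A)$, and then $\lambda(H\ge L+\delta)=\mathbb{P}(\{\max_{i<j}|X_i-X_j|\ge\delta\}\cap A)$ exactly.

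It remains to verify requirements 1, 3 and 4. For requirement 1, the issue is that an individual atom of $A$ need not have $\min_i X_i\le\tfrac12\le\max_i X_i$. Here I would invoke the structure of $\mathcal{C}'(n,m)$: since $\mathbb{P}(A)=\tfrac12$ and $X_i=\mathbb{E}(\mathbbm 1_A\mid\mathcal G_i)$, the pointwise bound $X_i\le\tfrac12$ on a set of positive $\mathbb P(\cdot\cap A)$-measure would have to be balanced by $X_i\ge\tfrac12$ elsewhere, but more to the point one can first \emph{symmetrize} (the vectors in $\mathcal{C}'$ already come from the $\widetilde X$ construction, so they are mixtures of $X$ and $1-X$); I would argue that WLOG we may further refine the atoms so that each lies entirely in $\{X_i\le\tfrac12\}$ or $\{X_i\ge\tfrac12\}$ per coordinate and that contributing atoms (those with $h-\ell\ge\delta>\tfrac12$) automatically satisfy $\ell\le\tfrac12\le h$ once we have symmetrized, then set $H\equiv\tfrac12$, $L\equiv\tfrac12$ on the non-contributing or badly-behaved part — this costs nothing in $\lambda(H\ge L+\delta)$ and only helps requirements 1 and 3. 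Requirement 3, $\lambda(H>\tfrac12)+\lambda(L<\tfrac12)\le\tfrac n2$, follows from a union bound: $\{H>\tfrac12\}\subseteq\bigcup_i\{X_i>\tfrac12\}$ on $A$, and by Lemma \ref{lemat} each $\mathbb{P}(\{X_i>\tfrac12\}\cap A)=\sum_{y>1/2}\mathbb{P}(\{X_i=y\}\cap A)\le\sum_{y>1/2}\mathbb P(\{X_i=y\}\cap A)\cdot\frac{y}{1-y}\cdot\frac{1-y}{y}$ — more cleanly, $\mathbb{P}(\{X_i>\tfrac12\}\cap A)\le\mathbb{P}(X_i>\tfrac12)\le\mathbb{E}X_i=\mathbb{P}(A)=\tfrac12$, wait that gives $\tfrac12$ per coordinate, total $\tfrac n2$ once we also bound the $L$ part symmetrically; assembling these two gives exactly $\tfrac n2$. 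Requirement 4 is the cleanest: by construction $\lambda(H=y)=\sum_{G:\,h_G=y}p_G=\mathbb{P}(\{\max_iX_i=y\}\cap A)$ and similarly for $L$, and the identity $\frac{1-y}{y}(\lambda(H=y)+\lambda(L=y))=\lambda(H=1-y)+\lambda(L=1-y)$ should follow from \eqref{A-Ac} applied to the coordinates achieving the max/min, together with Lemma \ref{lemat}; I would need a short combinatorial argument that summing the per-coordinate balance relations \eqref{A-Ac} over $i$ collapses onto the max/min statistics, possibly after the refinement above. Finally, taking the supremum over $X\in\mathcal{C}'(n,m)$ and over $m$ on the left and recalling $k=n$ in $\Lambda(n)$ yields \eqref{sup-new-HL} as an upper bound for \eqref{sup-new}.

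The main obstacle I anticipate is requirement 4 together with the "WLOG $\ell\le\tfrac12\le h$" reduction: the per-coordinate constraint \eqref{A-Ac} is a sum over $i$ of coordinate-wise level-set masses, whereas $\Lambda$'s condition 4 is phrased via the \emph{extreme-order} statistics $\max_i X_i$ and $\min_i X_i$; reconciling these requires carefully choosing how atoms are split so that, level by level, each atom contributes its mass to $H$ at exactly one value and to $L$ at exactly one value, and so that the bookkeeping of the $\frac{1-y}{y}$ weights matches. I expect this to be handled by first refining the atom partition so that on each atom every $X_i$ is either $\le\tfrac12$ or $\ge\tfrac12$, then using that $H>\tfrac12$ exactly when some $X_i>\tfrac12$ and $L<\tfrac12$ exactly when some $X_i<\tfrac12$, and invoking the symmetrization-induced relation \eqref{A-Ac} one value of $y$ at a time.
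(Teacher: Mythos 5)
There is a genuine gap, and it is exactly where you flag one yourself: requirement~4 does not follow from \eqref{A-Ac} if $(H,L)$ is built from the extreme order statistics $\max_i X_i$ and $\min_i X_i$. The constraint \eqref{A-Ac} is a balance between the level sets of \emph{all} coordinates, i.e.\ it involves $\sum_{i=1}^n\mathbb{P}(\{X_i=y\}\cap A)$, whereas your construction records only $\mathbb{P}(\{\max_i X_i=y\}\cap A)$ and $\mathbb{P}(\{\min_i X_i=y\}\cap A)$. Any atom on which some coordinate $X_i$ attains a value strictly between the min and the max contributes to the left side of \eqref{A-Ac} at that value but contributes nothing to $\lambda(H=y)$ or $\lambda(L=y)$ under your encoding. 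Within an atom of $A$ the vector $X$ is constant, so no amount of ``splitting the atom'' can make the intermediate coordinates reappear without enlarging the total interval length; an interval of length $\mathbb{P}(G\cap A)$ per atom simply carries too little information. This is not a bookkeeping detail that a short combinatorial argument could patch up; it is a structural mismatch, and the balance condition will generically fail.

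The paper's construction avoids this by \emph{not} projecting onto $(\max,\min)$. For an atom $A_k$ it allocates an interval of length $p_k=n\,\mathbb{P}(A_k)$ (or $(n-1)\,\mathbb{P}(A_k)$ for a ``contributing'' atom) and partitions it into $n$ (or $n-1$) equal subintervals, one per coordinate $s$; on the $s$-th subinterval it sets $H=\max(x^{(k)}_s,\tfrac12)$ and $L=\min(x^{(k)}_s,\tfrac12)$. Thus every coordinate value, including the intermediate ones, is recorded either in $H$ (if $>\tfrac12$) or in $L$ (if $<\tfrac12$), and $\lambda(H=y)+\lambda(L=y)=\sum_i\mathbb{P}(\{X_i=y\}\cap A)$ for $y\ne\tfrac12$, so requirement~4 becomes a verbatim restatement of \eqref{A-Ac}. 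For contributing atoms, one designated pair of subintervals (carrying a coordinate $\ge\tfrac12$ and one $\le\tfrac12$ that are $\delta$-separated) is merged into a single subinterval on which both $H>\tfrac12$ and $L<\tfrac12$ hold; that merged subinterval is exactly where $H\ge L+\delta$, and the saving of one subinterval's length is what makes the double-counting in requirement~3 come out to at most $n/2$. Note also that the intended total length is of order $n/2$, not $1/2$; your observation that requirement~3 is ``trivially'' met with total length $1/2$ is in fact the symptom that too much information has been discarded. If you want to repair the proposal, replace the $(\max,\min)$ push-forward by a per-coordinate layout as above; the rest of your outline (discretize, map atoms to intervals, read off requirements 1--4) then goes through.
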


\begin{proof} Fix $m\in \{1, 2, \dots\}$, $X\in \mathcal{C}'(n,m)$ and the corresponding event $A$. We will construct $(H_X,L_X)\in \Lambda(n)$ such that 
\begin{equation} \label{P<l} \mathbb{P}\Big( \Big\{\max_{1\le i < j\le n}|X_i-X_j|\ge \delta\Big\} \cap A  \Big) \ = \ \lambda(H_X \ge L_X+\delta). \end{equation}
As $X\in \mathcal{C}'(n,m)$, there exists a natural number $l\le m^n$ such that $X$ takes  exactly $l$ different values. It follows that $A$ can be partitioned into disjoint family $\{A_k\}_{k=1}^l$ of events of positive probability, so that $X$ is constant on every element of this partition; let $(X_1,\dots, X_n)\equiv(x_1^{(k)},x_2^{(k)},\dots, x_n^{(k)})$ on $A_k$. For $1\le k \le l$, we set

$$p_k \ \ = \ \ \left\{
  \begin{array}{@{}ll@{}}
    n\mathbb{P}(A_k) & \text{if} \ \ \ \ \max_{1\le i < j\le n}|X_i-X_j|< \delta \ \ \ \text{on} \ \ A_k,\\
    (n-1)\mathbb{P}(A_k) & \text{if} \ \ \ \ \max_{1\le i < j\le n}|X_i-X_j|\ge \delta \ \ \ \text{on} \ \ A_k,
  \end{array}\right.$$
and introduce a disjoint partition $\mathcal{I}=\{I_k\}_{k=1}^{l+1}$ of $\mathbb{R}_+$ by
$$I_k \ \ = \ \ \left\{
  \begin{array}{@{}ll@{}}
    $[$p_1+\dots+p_{k-1}, p_1+\dots+p_{k}) & \text{for} \ \ 1\le k \le l, \\
     $[$p_1+\dots+p_l,\infty) & \text{for} \ \  k=l+1.
  \end{array}\right.$$
Now we are ready to define $(H_X,L_X)$, setting its values on each element of $\mathcal{I}$ separately. Assume that $k\in\{1,\,2,\,\ldots,\,l+1\}$ and distinguish three major cases.

\smallskip

$\cdot$ If $k=l+1$, we put $H_X(x)=L_X(x)=\frac{1}{2}$ for all $x\in I_{l+1}$.

\smallskip

$\cdot$ If $k\leq l$ and $p_k=n\mathbb{P}(A_k)$, we split $I_k$ into $n$ consecutive intervals $\{I_{k,s}\}_{s=1}^n$ (left-closed, right-open) of equal length and set
   \begin{equation}\label{defHL}
 H_X(x)= \max\Big(x_{s}^{(k)}, \frac{1}{2}\Big),\qquad \qquad  L_X(x)= \min \Big(x_{s}^{(k)},\frac{1}{2} \Big) 
\end{equation}
whenever $x\in I_{k,s}$ and $s=1,\,2,\,\ldots,\,n$.

\smallskip
 
$\cdot$ Finally, suppose that $k\leq l$ and $p_k=(n-1)\mathbb{P}(A_k)$. Then there are two indices $1\leq i_1<i_2\leq n$  such that $X_{i_1}\ge X_{i_2}+\delta$ on $A_k$ (the choice of $i_1,\,i_2$ may not be unique; in such a case, we pick any pair with this property). We divide $I_k$ into $n-1$ consecutive intervals $I_{k,s}$ of equal length, $s\in \{1,\,2,\,\dots,\,n\}\setminus \{i_2\}$, and put
$$ H_X(x)=x_{i_1}^{(k)},\qquad L_X(x)=x_{i_2}^{(k)}\qquad \mbox{if }x\in I_{k,i_1},$$
while for $x\in I_{k,s}$ and $s\in\{1,\,2,\,\ldots,\,n\}\setminus \{i_1,i_2\}$, we use \eqref{defHL}. In other words, we proceed as in the previous case, but the intervals $I_{k,i_1}$ and $I_{k,i_2}$ are now ``glued'' into one.

\smallskip

Let us check that the function $(H,L)$ we have just obtained does belong to $\Lambda(n)$, i.e., it satisfies the four requirements 1.-4..   The first two conditions hold directly by the construction. To verify the point 3., we inspect carefully the three cases considered above. Note that $H=L=1/2$ on $I_{l+1}$, so 
$$ \lambda(\{H>1/2\}\cap I_{l+1})+\lambda(\{L<1/2\}\cap I_{l+1})=0.$$
 If $k\leq l$ and $p_k=n\mathbb{P}(A_k)$, then the restrictions of $H$ and $L$ to $I_k$ are given by \eqref{defHL}; directly by this formula, we see that the sets $\{H>1/2\}\cap I_k$ and $\{L<1/2\}\cap I_k$ are disjoint and hence 
$$ \lambda(\{H>1/2\}\cap I_k)+\lambda(\{L<1/2\}\cap I_k)\leq \lambda(I_k)=p_k=n\mathbb{P}(A_k).$$
Finally, if $k\leq l$ and $p_k=(n-1)\mathbb{P}(A_k)$, then the above construction implies that the intersection of $\{H>1/2\}\cap I_k$ and $\{L<1/2\}\cap I_k$ is precisely the interval $I_{k,i_1}$. Consequently,
$$ \lambda(\{H>1/2\}\cap I_k)+\lambda(\{L<1/2\}\cap I_k)\leq \lambda(I_{k,i_1})+\lambda(I_k)=n\mathbb{P}(A_k).$$
Summing the above inequalities/equalities over $k$ and noting that $\mathbb{P}(A_1)+\mathbb{P}(A_2)+\ldots+\mathbb{P}(A_l)=\mathbb{P}(A)=1/2$, we obtain 3. It remains to note that the last property is a direct consequence of (\ref{A-Ac}).
\end{proof}

The next step is the following reduction.
  
\begin{prop} \label{LAMBdelta} The quantity (\ref{sup-new-HL}) can be rewritten as
  \begin{equation} \label{sup-new-HL*}  
\sup_{(H,L) \in \Lambda^{\delta}(n)} \lambda( H\ge L+\delta ),
\end{equation}
  where $\Lambda^{\delta}(n)$ is the subset of all $(H,L)\in \Lambda(n)$ satisfying
  \begin{equation}\label{star1}  \Big \{H\in \Big(\frac{1}{2}, \delta\Big) \Big\} \cup \Big \{L\in \Big(1-\delta, \frac{1}{2} \Big) \Big\} = \emptyset \end{equation} 
  and
  \begin{equation}\label{star2} 
\{ H\ge L +\delta \} = \Big\{ L<\frac{1}{2} \Big\}. 
\end{equation} 
\end{prop}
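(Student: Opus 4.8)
Since $\Lambda^\delta(n)\subseteq\Lambda(n)$, the quantity \eqref{sup-new-HL*} is trivially at most \eqref{sup-new-HL}, and the whole content is the reverse inequality: given $(H,L)\in\Lambda(n)$ we must produce $(\overline H,\overline L)\in\Lambda^\delta(n)$ with $\lambda(\overline H\ge \overline L+\delta)\ge \lambda(H\ge L+\delta)$. Two bookkeeping remarks will be used throughout. First, since $\delta>\tfrac12$ and $0\le L\le\tfrac12\le H\le 1$ pointwise, every $x$ with $H(x)\ge L(x)+\delta$ has $H(x)\ge\delta$ and $L(x)\le 1-\delta$; hence $\{H\ge L+\delta\}\subseteq\{H\ge\delta\}\cap\{L\le 1-\delta\}$, and in particular $\{H\ge L+\delta\}\cap\{L=\tfrac12\}=\emptyset$, so one of the two inclusions in \eqref{star2} is automatic for any $(H,L)$. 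Second, because $L\le\tfrac12\le H$, requirement~4. in the definition of $\Lambda(n)$ reduces, for $y\in(\tfrac12,1]$, to $\lambda(L=1-y)=\tfrac{1-y}{y}\,\lambda(H=y)$; thus the value distribution of $L$ on $[0,\tfrac12)$ is slaved to that of $H$ on $(\tfrac12,1]$, deleting a packet of $H$-mass at a level $y$ together with the matching packet of $L$-mass at $1-y$ preserves requirement~4., and the left side of requirement~3. equals $\sum_{y>1/2}\lambda(H=y)/y$.

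\textbf{Step 1 (reduction to \eqref{star1}).} Put $P:=\{H\ge\delta\}\cap\{L\le 1-\delta\}$ and assemble a new pair $(H_1,L_1)$ on $\mathbb{R}_+$: on a measure-preserving copy of $P$ set $(H_1,L_1)=(H,L)$; lay the residual $H$-mass (where $H\ge\delta$ but $L>1-\delta$) against the value $\tfrac12$ of the second coordinate, the residual $L$-mass (where $L\le 1-\delta$ but $H<\delta$) against the value $\tfrac12$ of the first coordinate, and let the rest of $\mathbb{R}_+$ carry $H_1=L_1=\tfrac12$. Then $H_1$ has the same value distribution as $H$ on $[\delta,1]$, $L_1$ has the distribution this forces through requirement~4. (so $(H_1,L_1)\in\Lambda(n)$), requirement~3. only improves as mass was discarded, \eqref{star1} holds by construction, and since $\{H\ge L+\delta\}\subseteq P$ the copy of $P$ already contributes $\lambda(H\ge L+\delta)$ to $\{H_1\ge L_1+\delta\}$. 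Hence we may assume \eqref{star1} henceforth.

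\textbf{Step 2 (reduction to \eqref{star2}).} Fix $(H,L)\in\Lambda(n)$ obeying \eqref{star1} and choose the joint layout so that $E:=\lambda(H\ge L+\delta)$ is maximal for the given pair of value distributions; write $M=\{H\ge L+\delta\}$, so $M\subseteq\{L<\tfrac12\}$, and we must upgrade this to equality. If $\lambda(\{L<\tfrac12\}\setminus M)>0$ there is ``wasted'' sub-$\tfrac12$ mass of $L$, which by the slaving relation is paired level by level with a packet of $H$-mass on the complementary levels. The plan is to \emph{excise} this wasted $L$-mass together with its paired $H$-packet — which preserves requirement~4. and can only lower the budget in requirement~3. — and to verify that $E$ is not lost. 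The point requiring care is that the paired $H$-packet may itself carry event mass; this is controlled by (i) taking the optimal layout to be monotone and processing the wasted $L$-mass from its largest level downward, so that the $H$-packets removed carry no event mass, and, when that fails, (ii) reinvesting the (necessarily substantial) budget released by a wasteful widget into extra copies of the maximally efficient widget — $H\equiv 1$ stacked over $L\equiv 1-\delta$, together with its slaved $H\equiv\delta$ companion — thereby recovering any event mass temporarily lost. Iterating until no wasted $L$-mass remains yields $(\overline H,\overline L)\in\Lambda(n)$ still satisfying \eqref{star1}, with $\lambda(\overline L<\tfrac12)=\lambda(\overline H\ge \overline L+\delta)\ge E$; that is, \eqref{star2} holds and $(\overline H,\overline L)\in\Lambda^\delta(n)$, completing the argument.

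\textbf{Main obstacle.} The genuine difficulty is Step 2, the removal of the ``wasted'' part of $\{L<\tfrac12\}$: requirement~4. rigidly couples the levels of $H$ to those of $L$, so no excision is local, and one must either order the excisions so that no event mass is ever disturbed, or estimate the budget freed by a wasteful widget sharply enough to rebuild the lost event mass from efficient widgets. Step 1 and the preliminary bookkeeping are routine.
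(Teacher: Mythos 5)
Your Step 1 is correct and is essentially the same modification the paper uses (setting $H$ to $\tfrac12$ where $H\in(\tfrac12,\delta)$ and $L$ to $\tfrac12$ where $L\in(1-\delta,\tfrac12)$; requirement 4 survives because both sides at every level $y\in(\tfrac12,\delta)$ vanish afterwards). The problem is Step 2, and you have already correctly located it: your plan is to \emph{delete} (set to $\tfrac12$) the wasted $L$-mass at level $\gamma$ together with the paired $H$-mass at level $1-\gamma$, and you flag that this may destroy event mass. That gap is real, and neither of your two repair mechanisms closes it. For (i), an ``optimal monotone'' coupling does not guarantee that the packet $\{H=1-\gamma\}$ being deleted is event-free; indeed nothing prevents $\{H=1-\gamma\}$ from being entirely paired with very small values of $L$, in which case every deletion there costs event mass, and on top of that the deleted $H$-packet — now carrying $H=\tfrac12$ and $L<\tfrac12$ — becomes \emph{new} wasted mass, so the iteration need not even make progress. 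For (ii), the accounting fails: deleting wasted $L$-mass $\ell$ at level $\gamma$ and its slaved $H$-mass $h=\tfrac{1-\gamma}{\gamma}\ell$ frees at most $\ell+h=\ell/\gamma$ of budget in requirement 3, while rebuilding lost event mass $e\le h$ via the efficient widget ($H\equiv 1$ over $L\equiv 1-\delta$ plus its $H\equiv\delta$, $L\equiv\tfrac12$ companion) costs $e\cdot\tfrac{2-\delta}{1-\delta}$; the inequality $\ell/\gamma\ge \tfrac{1-\gamma}{\gamma}\ell\cdot\tfrac{2-\delta}{1-\delta}$ reduces to $\gamma\ge \tfrac{1}{2-\delta}$, which is incompatible with $\gamma\le 1-\delta$ for every $\delta\in(\tfrac12,1]$. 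So (ii) cannot cover the worst case.

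The fix is to not delete the paired $H$-packet at all: set $L$ to $\tfrac12$ on the offending interval $[a,b)\subset\{L=\gamma,\ H<\gamma+\delta\}$, and \emph{raise} $H$ from $1-\gamma$ to $1$ on a measured subset of $\{H=1-\gamma\}$ of measure $\tfrac{1-\gamma}{\gamma}(b-a)$. This preserves requirement 4 (the $y=1-\gamma$ balance still holds after both sides shrink proportionally, and the $y=1$ constraint only asserts $\lambda(L=0)=0$, which is untouched), strictly decreases the budget in requirement 3 by $b-a$, and can only \emph{increase} $\lambda(H\ge L+\delta)$, because increasing $H$ pointwise enlarges $\{H\ge L+\delta\}$ while the change of $L$ to $\tfrac12$ on $[a,b)$ affects only points that were already outside the event set. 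Each such step removes a full step of the (finite) set $\{L<\tfrac12\}\cap\{H<L+\delta\}$, so finitely many iterations yield a pair in $\Lambda^\delta(n)$ with objective at least as large. This is the paper's argument and it sidesteps the cascade you were fighting.
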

\begin{proof} Fix $(H,L)\in \Lambda(n)$ and assume that the condition (\ref{star1}) or (\ref{star2}) is not satisfied.  If \eqref{star1} fails, we modify $H$ and/or $L$ on the ``bad'' sets, changing their values to $\frac{1}{2}$ there. After this modification, the points $1$-$4$. are still satisfied and the value of $\lambda(H\ge L+\delta)$ remains unchanged. Now suppose that (\ref{star2}) does not hold. Because of the trivial inclusion $\{ H\ge L +\delta \} \subseteq \{ L<1/2\}$ and the equality $\{ L<1/2\}=\{ L\le1-\delta\}$ we have just guaranteed, there must exist $0\le a <b$ and $0<\gamma \le 1-\delta$ such that $L=\gamma$ and $H<\gamma+\delta$ on $[a,b)$. By point $4$., we can find pairwise disjoint intervals  $[a_j,b_j)$, $j=1,\dots,m$, satisfying
$$\bigcup_{j=1}^m [a_j,b_j)   \subset  \{H=1-\gamma\}\qquad \mbox{ and }\qquad \sum_{j=1}^m (b_j-a_j)    =   \frac{1-\gamma}{\gamma}\cdot(b-a).$$ 
Therefore, we can perform the following rearrangement:
\begin{enumerate}
\item change $L$ on $[a,b)$ from $\gamma$ to $\frac{1}{2}$,
\item change $H$ on $\bigcup_{j=1}^m [a_j,b_j)$ from $1-\gamma$ to $1$.
\end{enumerate}
This ``corrects'' the behavior  of $(H,L)$ on the troublesome interval $[a,b)$. Note that the obtained function belongs to $\Lambda(n)$ and the  value of $\lambda(H\ge L+\delta)$ is not decreased. It remains to observe
that we may guarantee the validity of (\ref{star2}), by performing sufficiently many such transformations. 
\end{proof}

The central part of the proof is the following estimate.

\begin{lemma} \label{centre} We have the identity 
$$\phi:= \sup  \frac{\lambda(L<\frac{1}{2})}{\lambda(H>\frac{1}{2})-\lambda(L<\frac{1}{2})} \ = \ \frac{1-\delta}{\delta},$$
where the supremum is taken over all $k\in \{1,\,2,\,\ldots\}$ and all $(H,L)\in \Lambda^{\delta}(k)$ satisfying $\lambda(H>1/2)>0$.
\end{lemma}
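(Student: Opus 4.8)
The plan is to establish the identity in two directions. For the lower bound $\phi\ge\frac{1-\delta}{\delta}$, I would construct an explicit family of admissible pairs $(H,L)\in\Lambda^\delta(k)$ approaching the ratio $\frac{1-\delta}{\delta}$. The natural candidate is to let $L=1-\delta$ on a set of some small measure $\varepsilon$, and compensate via condition 4: the balance law forces $H=\delta$ on a set of measure $\frac{\delta}{1-\delta}\varepsilon$, while additionally $H$ must be pushed to $1$ on a set whose measure is dictated by the $y=1-\delta$ (equivalently $y=\delta$) instance of condition 4 together with whatever mass $L$ puts at $1-\delta$; one balances the ``atoms'' of $H$ and $L$ away from $1/2$ so that all four requirements hold, checks $\lambda(H>1/2)+\lambda(L<1/2)\le k/2$ by taking $\varepsilon$ small, and computes the ratio $\frac{\lambda(L<1/2)}{\lambda(H>1/2)-\lambda(L<1/2)}$ directly. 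By \eqref{star2} we have $\{H\ge L+\delta\}=\{L<1/2\}$, and a short computation with the balance constraint should pin the ratio at exactly $\frac{1-\delta}{\delta}$ for the extremal configuration.

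For the upper bound $\phi\le\frac{1-\delta}{\delta}$, the idea is to exploit conditions 1, 3 and 4 of the definition of $\Lambda^\delta(k)$ together with \eqref{star1}. Write $P=\lambda(H>1/2)$ and $N=\lambda(L<1/2)$, so we must show $N\le\frac{1-\delta}{\delta}(P-N)$, i.e. $\delta N\le(1-\delta)P-(1-\delta)N$, i.e. $N\le(1-\delta)P$. Now integrate the balance identity of condition 4 against a suitable weight. Concretely, summing $\lambda(H=y)+\lambda(L=y)$ over $y$ with weights reflecting the ratio $\frac{1-y}{y}$ should convert the mass $L$ places below $1/2$ into mass $H$ places above $1/2$, but with a loss factor. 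Because of \eqref{star1}, every value of $L$ below $1/2$ is in fact $\le 1-\delta$, so $\frac{1-y}{y}\ge\frac{\delta}{1-\delta}$ for those $y$; dually, every value of $H$ above $1/2$ is $\ge\delta$. These two facts feed into the balance law to give the numerical inequality $N\le(1-\delta)P$ after summing over the relevant $y$'s. The point is that the ``partner mass'' $\lambda(H=1-y)+\lambda(L=1-y)$ created by an atom of $L$ at height $y<1/2$ necessarily lives in $\{H>1/2\}$ (since $1-y>1/2$ and $L\le 1/2$ forces $L=1-y$ only if $1-y=1/2$, which is excluded).

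I expect the main obstacle to be bookkeeping the interplay between the two summands in condition 4: the quantity $\lambda(H=y)+\lambda(L=y)$ mixes contributions from $H$ and from $L$, and one must carefully track which atoms sit in $\{H>1/2\}$ versus $\{L<1/2\}$, using condition 1 ($L\le 1/2\le H$) to separate them — at a given height $y\ne 1/2$ at most one of $H,L$ can attain it with positive measure on the ``wrong side,'' but at $y=1/2$ both can, and $y=1/2$ contributes to neither $P$ nor $N$, so it drops out harmlessly. The remaining care is to make the weighted summation rigorous: one orders the finitely many jump values of $H$ and $L$, applies condition 4 termwise, and uses the monotonicity of $y\mapsto\frac{1-y}{y}$ on $(0,1)$ together with the cutoffs from \eqref{star1} to bound each term. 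Once $N\le(1-\delta)P$ is in hand, rearranging gives $\frac{N}{P-N}\le\frac{1-\delta}{\delta}$ (the denominator is positive since $N<P$ whenever the configuration is nontrivial, as $\{L<1/2\}$ forces partner mass strictly inside $\{H>1/2\}\setminus\{L<1/2\}$), completing the proof.
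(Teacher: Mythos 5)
Your lower-bound construction is sound in outline: taking $L=1-\delta$ on a set of measure $\varepsilon$ and, by condition 4 together with \eqref{star2}, putting $H=1$ there, plus $H=\delta$ (with $L=\tfrac12$) on a companion set of measure $\tfrac{\delta}{1-\delta}\varepsilon$, produces an admissible pair achieving the ratio $\tfrac{1-\delta}{\delta}$ exactly. So $\phi\ge\tfrac{1-\delta}{\delta}$ is not the issue.

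The upper bound, however, has a genuine gap. You propose to derive $N\le(1-\delta)P$ from conditions 1, 3, 4 and \eqref{star1} alone. Let us see what those ingredients actually give. For $y>\tfrac12$ condition 1 kills $\lambda(L=y)$ and $\lambda(H=1-y)$, so condition 4 reads $\lambda(L=1-y)=\tfrac{1-y}{y}\lambda(H=y)$; summing over the finitely many values $y>\tfrac12$ of $H$ gives
\begin{equation*}
N=\lambda\Bigl(L<\tfrac12\Bigr)=\sum_{y>1/2}\frac{1-y}{y}\,\lambda(H=y)
\ \le\ \frac{1-\delta}{\delta}\sum_{y>1/2}\lambda(H=y)=\frac{1-\delta}{\delta}\,P,
\end{equation*}
because \eqref{star1} only guarantees $y\ge\delta$, hence $\tfrac{1-y}{y}\le\tfrac{1-\delta}{\delta}$. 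This is the \emph{best} one can extract from your ingredients, and it is strictly weaker than $N\le(1-\delta)P$: the latter is equivalent to $\sum_{y>1/2}\tfrac{1-(2-\delta)y}{y}\lambda(H=y)\le0$, and the integrand $\tfrac{1-(2-\delta)y}{y}$ is \emph{positive} for $y\in[\delta,\tfrac{1}{2-\delta})$, so the inequality cannot follow from termwise estimates. Plugging $N\le\tfrac{1-\delta}{\delta}P$ into $\tfrac{N}{P-N}$ only yields $\phi\le\tfrac{1-\delta}{2\delta-1}$, which in the end produces the bound $n(1-\delta)$ in Theorem \ref{contrN} rather than $\tfrac{n(1-\delta)}{2-\delta}$.

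What is missing is the pointwise strength of \eqref{star2}: on $\{L=1-y\}$ you do not merely have $H\ge\delta$, you have $H\ge 1-y+\delta$. Thus an atom of $H$ at height $y$ forces partner $L$-mass at $1-y$, which in turn forces partner $H$-mass at heights $\ge 1-y+\delta$, which forces further $L$-mass at heights $\le y-\delta$, and so on. Your one-shot summation of condition 4 cannot see this chaining; in fact, the configuration ``all $H$-mass at height $\delta$'', which saturates your bound $N=\tfrac{1-\delta}{\delta}P$, is inadmissible precisely because its partner $L$-mass at $1-\delta$ would force $H=1$ there. The paper handles this iterative coupling by organizing $\{H>\tfrac12\}$ into a forest (roots where $L=\tfrac12$, children given by the partner relation) and reducing the supremum to a Bellman function $\Phi$ satisfying $\Phi(x)=\tfrac{1-x}{x}\bigl(1+\sup_{y\ge1-x+\delta}\Phi(y)\bigr)$, which one identifies as $\Phi(x)=\tfrac{1-x}{\delta}$. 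Some mechanism of this recursive type — explicitly tracking the cascade of partner constraints imposed by \eqref{star2} — is indispensable; a flat weighted sum over $y$ will not close the gap.
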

We postpone the proof of this lemma to the next section, and proceed with our main result. 
\begin{proof}[Proof of Theorem \ref{contrN}] By Propositions \ref{DISCR}, \ref{SYM7} (iv), \ref{LAMB} and \ref{LAMBdelta}, we can write
\begin{align*} \sup_{(X_1,X_2,\dots, X_n)\in \mathcal{C}} \mathbb{P}\Big(\max_{1\le i < j\le n}|X_i-X_j|\ge \delta\Big)  \ \ &\le \ \ 2\cdot \sup_{(H,L) \in \Lambda(n)} \lambda(H\ge L+\delta)\\
&= \  2\cdot \sup_{(H,L) \in \Lambda^{\delta}(n)} \lambda \Big(L<\frac{1}{2}\Big).\end{align*}
Fix $(H,L)\in \Lambda^{\delta}(n)$. By Lemma \ref{centre}, we have
$  \lambda (L<1/2) \le (1-\delta)\lambda (H>1/2)$,
while the point $3$. gives 
$
 \lambda (L<1/2) + \lambda (H>1/2)  \le n/2.$ 
Combining these two estimates, we immediately obtain
$$ \mathbb{P}\Big(\max_{1\le i < j\le n}|X_i-X_j|\ge \delta\Big) \le  \frac{n(1-\delta)}{2-\delta}.$$
It remains to prove the sharpness of (\ref{EqN}). Observe that the function $\delta\mapsto (1-\delta)/(2-\delta)$ is decreasing on $[0,1]$, so the claim will follow if we construct an appropriate coherent vector $(Z_i)_{i=1}^n$ for every $\delta$ with $n(1-\delta)/(2-\delta)\le 1$. To this end, let
$\{A_0, A_1, \dots, A_n\} \cup \{B_0, B_1,\dots, B_n\}$ be a measurable partition of $\Omega$ satisfying
$$ \mathbb{P}(A_0)=\mathbb{P}(B_0)=\frac{1}{2}\cdot\left(1-\frac{n(1-\delta)}{2-\delta}\right)$$
and
$$\mathbb{P}(A_i)=\mathbb{P}(B_i)= \frac{1}{2}\cdot \frac{1-\delta}{2-\delta}\quad \mbox{   for  } 1\le i \le n.$$
Put $A=\bigcup_{i=0}^{n}A_i$, $B=\bigcup_{i=0}^n B_i$ and consider the $\sigma$-algebras
$$\mathcal{F}_i=\sigma \Big(A_i, B_i, (A\cup B_{i+1})\setminus A_{i+1}, (B\cup A_{i+1})\setminus  B_{i+1}  \Big),\qquad i=1,\,2,\,\ldots,\,n$$
(with the cyclic convention $A_{n+1}=A_1$, $B_{n+1}=B_1$). It is straightforward to check that the variables $Z_i=\mathbb{E}(\mathbbm{1}_{A}|\mathcal{F}_i)$, $i=1,\,2,\,\ldots,\,n$, satisfy
 $$ Z_i =\begin{cases}
1 & \mbox{on } \ A_i,\smallskip\\
\displaystyle 0 & \mbox{on } \ B_i,\\
\displaystyle \delta & \mbox{on } \ (A\cup B_{i+1})\setminus (A_i\cup A_{i+1}),\\
\displaystyle 1-\delta & \mbox{on } \ (B\cup A_{i+1})\setminus (B_i\cup B_{i+1}).
\end{cases}$$
Consequently, we have $\max_{1\le i < j\le n}|Z_i-Z_j|\ge \delta$ on each $A_k$ and each $B_k$; this proves the estimate 
$$\mathbb{P}\left(\max_{1\le i < j\le n}|Z_i-Z_j|\ge \delta\right)\geq \mathbb{P}(\Omega\setminus (A_0\cup B_0))=\frac{n(1-\delta)}{2-\delta},$$
which is the desired lower bound.
\end{proof}

\section{Proof of Lemma \ref{centre}} 

We will use some basic terminology from the theory of graphs. Recall that a simple (directed) graph $G$ is an ordered pair  $(V_G,E_G)$, where $V_G$ is the set of vertices and $E_G\subset V_G\times V_G$ is the collection of all edges. A simple graph is called a tree, if any two vertices are connected by exactly one path; a forest is a disjoint union of trees. 

From now on, we will use a shorter notation and write $\Lambda^{\delta}(\mathbb{N})$ instead of $\bigcup_{k=1}^{\infty}\Lambda^{\delta}(k)$. We start with an arbitrary $(H,L)\in \Lambda^{\delta}(\mathbb{N})$ satisfying $\lambda(H>1/2)>0$ and describe how such a function gives rise to a (directed) forest graph $\mathcal{T}_{(H,L)} = (\mathcal{V}_{(H,L)}, \mathcal{E}_{(H,L)})$. We will proceed  by induction, the intervals under consideration will always be left-closed and right-open:

\begin{enumerate}
\item Induction base. By 4., we have $\lambda(L<1/2)<\lambda(H>1/2)$ and hence $\lambda(H>\frac{1}{2}, L=\frac{1}{2})>0$. Therefore, we can find a finite family $\mathcal{V}_1=\{I_1^{(1)}, I_2^{(1)},\dots, I_{k_1}^{(1)}\}$ of disjoint intervals, such that
$$\bigcup_{j=1}^{k_1} I_j^{(1)} \ = \ \Big\{H>\frac{1}{2},\,L=\frac{1}{2}\Big\}$$
and such that $H$ is constant on each interval, say, $H=x_j^{(1)}$ on $I_j^{(1)}$ for $1\le j\le k_{1}$. Set $\mathcal{E}_1=\emptyset$.

\item Induction step. Suppose that we have successfully constructed $\mathcal{V}_j$ and $\mathcal{E}_j$ for $j\le i-1$. Moreover, assume that $\mathcal{V}_{i-1}=\{I_{1}^{(i-1)}, I_2^{(i-1)},\dots, I_{k_{i-1}}^{(i-1)}\}$ and $H=x_{j}^{(i-1)}$ on $I_{j}^{(i-1)}$ for $1\le j \le k_{i-1}$. By point $4$. there exists a finite family $\bigcup_{j=1}^{k_{i-1}}\{J_1^j, J_2^j, \dots, J_{m_j}^j\}$ of disjoint intervals, such that
$$\bigcup_{l=1}^{m_j} J_{l}^j  \ \ \subset \ \  \{L=1-x_j^{(i-1)}\}\setminus \bigcup_{n=1}^{i-1} \bigcup \mathcal{V}_n,\quad \qquad \sum_{l=1}^{m_j} \lambda(J_{l}^j)    =   \frac{1-x_j^{(i-1)}}{x_j^{(i-1)}}\cdot \lambda(I_{j}^{(i-1)})$$
for $j=1,2,\dots, k_{i-1}$, and such that $H$ is constant on each $J_l^j$. Set
$$\mathcal{V}_i \ = \ \bigcup_{j=1}^{k_{i-1}}\{J_1^j, J_2^j, \dots, J_{m_j}^j\}\qquad \mbox{ and }\qquad 
\mathcal{E}_i \ = \ \mathcal{E}_{i-1}\cup \bigcup_{j=1}^{k_{i-1}}\{I_{j}^{(i-1)}\} \times \{J_1^j, J_2^j, \dots, J_{m_j}^j\}, $$
\end{enumerate}
and put $\mathcal{V}_{(H,L)}=\bigcup_{i=1}^{\infty}\mathcal{V}_i$, $\mathcal{E}_{(H,L)}=\bigcup_{i=1}^{\infty}\mathcal{E}_i$. 

\smallskip

To gain some intuition about the above construction, it is convenient to carry out an explicit calculation. 

\begin{exe} Let $\delta=0.7$ and consider a pair $(H,L)$ given by
\begin{align*}
 H&=\chi_{[1,3)}+\frac{7}{8}\big(\chi_{[0,1)}+\chi_{[3,5)}+\chi_{[8,12)}\big)+\frac{3}{4}\big(\chi_{[5,8)}+\chi_{[12,15)}\big)+\frac{1}{2}\chi_{[15,\infty)},\\
 L&=\frac{1}{8}\chi_{[0,1)}+\frac{1}{4}\chi_{[1,3)}+\frac{1}{2}\chi_{[3,\infty)}.
 \end{align*}
\begin{figure}[htbp]
\begin{center}\includegraphics[scale=0.7]{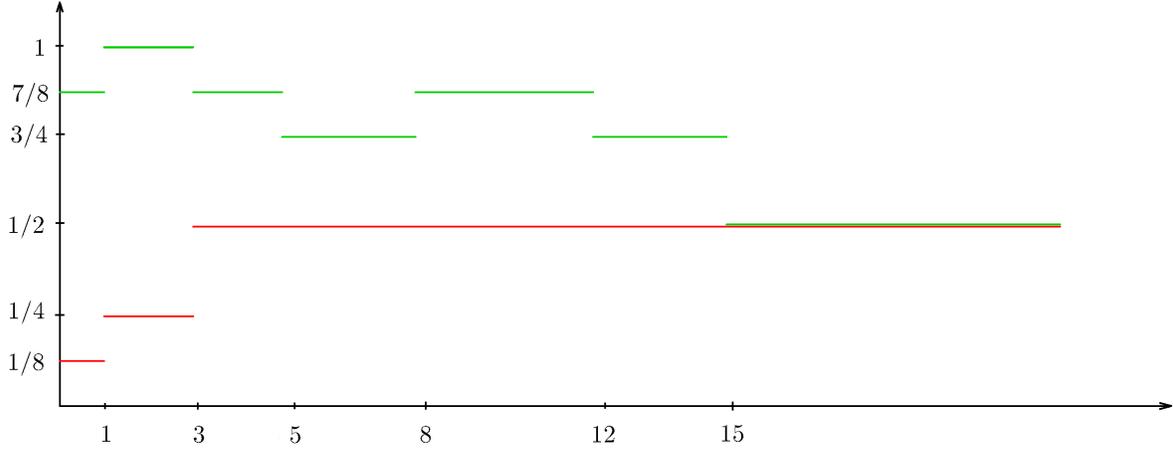}
\caption{
The graphs of the function $H$ (green) and $L$ (red).}\label{graph}
\end{center}
\end{figure} 
It is not difficult to check that $(H,L)\in \Lambda^\delta(36)$. Let us now explain the construction of the forest $\mathcal{T}_{(H,L)}$. The starting point is to look at the set $\{H>1/2,L=1/2\}=[3,15)$. In our case, this set splits into four intervals on which $H$ is constant: $[3,5)$, $[5,8)$, $[8,12)$ and $[12,15)$. These four intervals are the roots of four trees which will form the forest $\mathcal{T}_{(H,L)}$. Next, for each root we describe its descendants; it is best to explain the procedure on a given root, say, $[5,8)$. The length of the interval is equal to $3$ and the function $H$ is equal to $3/4$ there. The application of the property 4. with $y=3/4$ gives 
\begin{equation}\label{constraint}
\lambda(H=3/4)=3\lambda(L=1/4),
\end{equation}
i.e., the set $\{L=1/4\}$ is three times smaller than $\{H=3/4\}$. The children of $[5,8)$ are the pairwise disjoint subintervals $J^1_1,J^1_2,\ldots,J^1_{m_1}$ of $\{L=1/4\}$ for which the measure constraint \eqref{constraint} is preserved:
$$ \lambda([5,8))=3\lambda\Big(\bigcup_{j=1}^{m_1} J^1_j\Big),$$
and such that $H$ is constant on each $J^1_j$. There is a lot of ambiguity with the choice of $J^j$'s, we may actually take a single child $J^1_1=[1,2)$. We carry out a similar procedure with each root, making sure that all the children obtained in the process are pairwise disjoint. For example, at the end we may obtain the following (partial) forest:

\smallskip

\begin{figure}[htbp]
\begin{center}\includegraphics[scale=1.1]{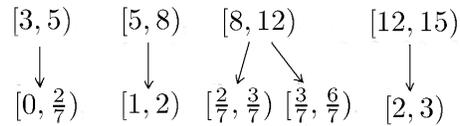}
\caption{Partial forest: roots and their children. 
}\label{graph2}
\end{center}
\end{figure}

Note that the intervals $[\frac{2}{7},\frac{3}{7})$ and $[\frac{3}{7},\frac{6}{7})$ could as well be merged into one $[\frac{2}{7},\frac{6}{7})$: then the root $[8,12)$ would have just one descendant. Next, we continue the procedure, but now the role of the roots is played by the children of the first generation which have been just constructed. It is clear that the procedure is well-defined: by property 4.,  at each step there are no problems with the existence of intervals satisfying appropriate measure and disjointness requirements.  We would just like to mention  that any interval on which $H$ is equal to $1$ does not have any descendants (the tree is cut at such a vertex).
\end{exe}


Now we will establish the following important fact.

\begin{prop}The family $\mathcal{V}_{(H,L)}$ is disjoint and $$\bigcup \mathcal{V}_{(H,L)} \ = \ \Big\{H>\frac{1}{2}\Big\},$$ up to a set of measure zero.
\end{prop}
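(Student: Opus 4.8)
The plan is to handle the two assertions separately, treating the disjointness and one inclusion as routine and concentrating all the work on the reverse inclusion up to null sets.

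\emph{Disjointness and the easy inclusion.} The family $\mathcal{V}_{(H,L)}=\bigcup_i\mathcal{V}_i$ is pairwise disjoint essentially by fiat: within each generation $\mathcal{V}_i$ the chosen intervals are disjoint, and $\mathcal{V}_i$ is moreover selected inside the complement of $\bigcup_{n<i}\bigcup\mathcal{V}_n$, so an induction on $i$ finishes. For the inclusion $\bigcup\mathcal{V}_{(H,L)}\subseteq\{H>\tfrac12\}$, note that $\mathcal{V}_1$ is literally a partition of $\{H>\tfrac12,\,L=\tfrac12\}$, while for $i\ge2$ every interval of $\mathcal{V}_i$ lies in a level set $\{L=1-x\}$ with $x>\tfrac12$, hence in $\{L<\tfrac12\}$; and on $\{L<\tfrac12\}$ we have $H\ge L+\delta\ge\delta>\tfrac12$ by \eqref{star2} and $L\ge0$. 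So the entire content of the proposition is the estimate $\lambda\big(\{H>\tfrac12\}\setminus\bigcup\mathcal{V}_{(H,L)}\big)=0$.

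\emph{The deficit identity.} Write $V=\bigcup\mathcal{V}_{(H,L)}$ and, for each of the finitely many values $v>\tfrac12$ of the step function $H$, set $d(v)=\lambda(\{H=v\}\setminus V)\ge0$; note $\sum_v d(v)\le\lambda(\{H>\tfrac12\})<\infty$. The first thing I would establish is
$$\lambda\big(\{L=1-v\}\setminus V\big)\ =\ \frac{1-v}{v}\,d(v)\qquad\text{for every value }v\in(\tfrac12,1)\text{ of }H.$$
This follows by combining two computations. First, condition $4.$ applied with $y=1-v$, together with $\lambda(H=1-v)=0=\lambda(L=v)$ (since $1-v<\tfrac12\le H$ and $L\le\tfrac12<v$), gives $\lambda(L=1-v)=\tfrac{1-v}{v}\lambda(H=v)$. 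Second, unwinding the inductive definition of the $\mathcal{V}_i$: $H$ is constant on every interval of every generation, the children of an interval $I$ carrying the value $x$ lie in $\{L=1-x\}$ and have total length $\tfrac{1-x}{x}\lambda(I)$, so $V\cap\{L=1-v\}$ is exactly the disjoint union, over $i\ge2$, of the children of the intervals of $\mathcal{V}_{i-1}$ on which $H\equiv v$; summing over $i$ yields $\lambda(V\cap\{L=1-v\})=\tfrac{1-v}{v}\lambda(\{H=v\}\cap V)$. Subtracting the two identities gives the display.

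\emph{Closing by double counting.} On $\{H=v\}$ one has, up to a null set, $L\in\{\tfrac12\}\cup(0,1-\delta]$ — by $L\le\tfrac12$, by \eqref{star1}, and because $\lambda(L=0)=0$ (condition $4.$ with $y=1$) — and $\{H=v,\,L=\tfrac12\}\subseteq\bigcup\mathcal{V}_1\subseteq V$. Hence, up to null sets, $\{H=v\}\setminus V=\bigcup_{v'}\big(\{H=v,\,L=1-v'\}\setminus V\big)$, the union over the values $v'\in(\tfrac12,1)$ of $H$, so that $d(v)=\sum_{v'}\lambda(\{H=v,\,L=1-v'\}\setminus V)$. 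Summing this over $v$ and interchanging the order, the inner sum over $v$ equals $\lambda(\{L=1-v'\}\setminus V)=\tfrac{1-v'}{v'}d(v')$ by the deficit identity, so
$$\sum_v d(v)\ =\ \sum_{v'}\frac{1-v'}{v'}\,d(v')\ \le\ \frac{1-\delta}{\delta}\sum_{v'}d(v'),$$
where the inequality uses that every value $v'>\tfrac12$ of $H$ satisfies $v'\ge\delta$ (again \eqref{star1}) and that $t\mapsto\tfrac{1-t}{t}$ is decreasing. Since $\sum_v d(v)<\infty$ and $\tfrac{1-\delta}{\delta}<1$, this forces $\sum_v d(v)=0$, i.e. $d\equiv0$, which is precisely $\lambda(\{H>\tfrac12\}\setminus V)=0$.

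\emph{Main obstacle.} The delicate point is the deficit identity, and in particular recognizing that a crude containment yields only the lossy bound $d(v)\le\sum_{v'}\tfrac{1-v'}{v'}d(v')$, with no per-$v$ gain. The saving idea is the bookkeeping that converts the ``row'' deficits $d(v)$ into ``column'' deficits $\tfrac{1-v'}{v'}d(v')$; carrying this out requires being precise that the construction places the children of the $H\equiv v$ intervals \emph{only} inside the single level set $\{L=1-v\}$ and with exactly the prescribed total length at each stage. Once that is in place, it is exactly the hypothesis $\delta>\tfrac12$, through $\tfrac{1-\delta}{\delta}<1$, that makes the estimate self-improving and collapses the total deficit to zero.
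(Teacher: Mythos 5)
Your argument is correct and follows essentially the same route as the paper's: both establish $\bigcup\mathcal{V}_{(H,L)}\subseteq\{H>\tfrac12\}$ via \eqref{star2}, derive the residual balance on $U=\{H>\tfrac12\}\setminus\bigcup\mathcal{V}_{(H,L)}$ by subtracting the balance accounted for by the forest from the global condition $4$ (your ``deficit identity''), and then sum over the finitely many values of $H$ and invoke $\tfrac{1-\delta}{\delta}<1$ to force $\lambda(U)=0$. Your ``double counting'' framing of the last step is a cosmetic reorganization of the paper's direct contradiction, where $\lambda(\{L<\tfrac12\}\cap U)<\lambda(\{H>\tfrac12\}\cap U)$ is played against $U=\{L<\tfrac12\}\cap U=\{H>\tfrac12\}\cap U$.
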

\begin{proof} The first part follows from the very construction. To prove the second part, we  will first show inductively that 
\begin{equation}\label{ind2}
H>\frac{1}{2}\quad \mbox{ for every }J\in \mathcal{V}_{(H,L)}.
\end{equation}
Indeed, we obviously have $H>\frac{1}{2}$ on $\bigcup \mathcal{V}_1$. So, fix $i\in \{2,3,\dots\}$ and assume that $H>\frac{1}{2}$ on $\bigcup_{n=1}^{i-1}\bigcup \mathcal{V}_n$. Let $J\in \mathcal{V}_{i}$ be an arbitrary interval and let $I\in \mathcal{V}_{i-1}$ be the father of $J$ (relative to the structure of the tree $\mathcal{T}_{(H,L)}$). Then there exists $x>\frac{1}{2}$ such that $H\equiv x$ on $I$ and $L\equiv 1-x<\frac{1}{2}$ on $J$. By the definition of $\Lambda^{\delta}(\mathbb{N})$, we have $ \big\{ L<1/2 \big\} = \{ H\ge L +\delta \}$
and hence $H\ge 1-x+\delta \ge \delta> \frac{1}{2}$ on $J$. This completes the proof of \eqref{ind2}. 
To show the reverse inclusion (up to a set of measure zero), put $U=\big\{H>1/2 \big\} \setminus \bigcup \mathcal{V}_{(H,L)}$ 
and assume that $\lambda(U)>0$. Recall that, again by the definition of $\Lambda^{\delta}(\mathbb{N})$, we have
$$\Big\{H\in \Big(\frac{1}{2},\delta\Big)\Big\} = \Big\{ L\in \Big(1-\delta, \frac{1}{2}\Big) \Big\} = \emptyset.$$
Fix $y\in[\delta,1]$ and note that by the construction of the sets $\mathcal{V}_i$ above,  we may write
$$\frac{1-y}{y}\cdot \sum_{n=1}^{i-1} \ \sum_{\{I\in \mathcal{V}_n: \  H\equiv y \  \text{on} \   I\}} \lambda(I) \ \ = \ \ \sum_{n=2}^{i} \ \sum_{\{J\in \mathcal{V}_n: \  L \equiv 1-y \  \text{on} \   J\}} \lambda(J),$$
for all $i=2,3,\dots$. Hence, passing with $i$ to infinity yields
\begin{equation} \label{4.Vhl}  \frac{1-y}{y}\cdot \sum_{\{I\in \mathcal{V}_{(H,L)}: \  H\equiv y \  \text{on} \   I\}} \lambda(I) \ \ = \ \  \ \sum_{\{J\in \mathcal{V}_{(H,L)}: \  L\equiv 1-y \  \text{on} \   J\}} \lambda(J).   \end{equation} 
On the other hand, just by the property $4$., we have
\begin{equation} \label{4.(H,L)} \frac{1-y}{y}\cdot \lambda(H=y) \ = \ \lambda(L=1-y).\end{equation}
Subtracting (\ref{4.Vhl}) from (\ref{4.(H,L)}), we get
\begin{equation} \label{4.U} \frac{1-y}{y}\cdot \lambda \Big(\{H=y\}\cap U \Big) \ = \ \lambda \Big(\{L=1-y\}\cap U \Big),\end{equation}
for $y\in[\delta,1]$. Next, by the property $2$., there exists a finite sequence  $y_1, y_2,\dots, y_k \in [\delta,1]$, satisfying
$$\lambda \Big(\Big\{H\not \in \{y_1,y_2,\dots, y_k\}\Big\}\cap U \Big) \ = \ \lambda \Big(\Big\{L\not \in \{1-y_1, 1-y_2,\dots, 1-y_k\} \Big\} \cap U \Big) \ = \ 0.$$
Therefore, summing (\ref{4.U}) for $y_1,y_2,\dots, y_k$, we obtain the inequality
\begin{align*} \sum_{i=1}^k \lambda \Big(\{L=1-y_i\}\cap U \Big)  &= \sum_{i=1}^{k}\frac{1-y_i}{y_i}\cdot \lambda\Big(\{H=y_i\}\cap U\Big)\le \frac{1-\delta}{\delta}\cdot \sum_{i=1}^k \lambda\Big(\{H=y_i\} \cap U \Big),\end{align*}
and hence 
\begin{equation} \label{LU<HU} \lambda\Big(\Big\{L<\frac{1}{2}\Big\}\cap U\Big) \ < \ \lambda\Big(\Big\{H>\frac{1}{2}\Big\}\cap U\Big)\end{equation}
if only the right-hand side of (\ref{LU<HU}) is positive. At the same time, we have
$$U\cap \Big\{H>\frac{1}{2}, \ L=\frac{1}{2} \Big\} \ = \ \emptyset,$$
since the set $\{H>\frac{1}{2}, L=\frac{1}{2}\}$ has been already covered by $\mathcal{V}_1$. Consequently, we get
$$\Big\{H>\frac{1}{2}\Big\} \cap U \  = \ \Big\{L<\frac{1}{2} \Big\} \cap U \ =  \ U$$
and thus
$$ \lambda\Big(\Big\{L<\frac{1}{2}\Big\}\cap U\Big) \ = \ \lambda\Big(\Big\{H>\frac{1}{2}\Big\}\cap U\Big),$$
which contradicts (\ref{LU<HU}).
\end{proof}

We are ready to connect the above graph structure with the assertion of Lemma \ref{centre}. 
Under the notation we have just introduced, the expression for $\phi$ can be rewritten in the form

\begin{equation} \label{centre2} \phi =  \sup_{(H,L)\in \Lambda^{\delta}(\mathbb{N})} \frac{\sum_{J\in \mathcal{V}_{(H,L)\setminus \mathcal{V}_1}} \lambda(J)}{ \sum_{I\in \mathcal{V}_1} \lambda(I)}. \end{equation}
We split the forest $\mathcal{T}_{(H,L)}$ into the disjoint trees: for $1\le j \le k_1$, let $\mathcal{T}_{(H,L)}^{j}$ denote the directed tree with root $I_j^{(1)}$. Then we have
\begin{align*} \frac{\sum_{J\in \mathcal{V}_{(H,L)\setminus \mathcal{V}_1}} \lambda(J)}{ \sum_{I\in \mathcal{V}_1} \lambda(I)}  &= \frac{\sum_{j=1}^{k_1} \Big[\lambda \Big(\bigcup \mathcal{T}_{(H,L)}^j\Big)-\lambda \Big(I_j^{(1)}\Big)\Big]}{\sum_{j=1}^{k_1} \lambda(I_j^{(1)})} \le \max_{1\le j \le k_1} \frac{\lambda \Big(\bigcup \mathcal{T}_{(H,L)}^j\Big)-\lambda \Big(I_j^{(1)}\Big)}{ \lambda(I_j^{(1)})}. \end{align*}
This inequality leads to a convenient reduction: in the problem (\ref{centre2}) it is enough to consider $(H,L)$ with $\mathcal{T}_{(H,L)}=\mathcal{T}_{(H,L)}^1$, i.e. in the context when the underlying forest structure consists of a single tree. Let us discuss some further simplifications. With no loss of generality, we may assume that $\lambda(I_{1}^{(1)})=1$. Indeed, scaling $I_1^{(1)}$ by a factor $c>0$ results in scaling all intervals in $\mathcal{V}_2$ by the same factor, which, in turn, leads to the same scaling of all intervals generated by $\mathcal{V}_2$ (i.e. $\mathcal{V}_3$), and so on. Summarizing, we have obtained
\begin{equation} \label{centre3} \phi \ \ = \ \  \sup_{\Xi^{\delta}(\mathbb{N})} \Big[\lambda\Big(\bigcup \mathcal{V}_2\Big) +\lambda\Big(\bigcup \mathcal{V}_3\Big)+\dots\Big],  \end{equation}
where supremum is taken over
 $$\Xi^{\delta}(\mathbb{N}) := \ \Big\{(H,L)\in \Lambda^{\delta}(\mathbb{N}): \ \mathcal{T}_{(H,L)}=\mathcal{T}_{(H,L)}^1 \ \ \text{and} \ \ \lambda(I_{1}^{(1)})=1\Big\}.$$
 Note that the series under supremum in (\ref{centre3}) is uniformly convergent: by the construction, we have 
$$ \lambda \Big(\bigcup \mathcal{V}_{m+1}\Big) \ \le \ \frac{1-\delta}{\delta} \cdot \lambda\Big(\bigcup \mathcal{V}_{m}\Big),$$
for all $m=1,2,\dots$ Therefore, we can reformulate (\ref{centre3}) as
\begin{equation} \label{centre4} \phi \ = \ \ \lim_{m\to \infty } \ \Bigg( \sup_{\Xi^{\delta}(\mathbb{N})} \ \sum_{j=2}^m \ \lambda\Big(\bigcup \mathcal{V}_j\Big)\Bigg). \end{equation}

To compute the above supremum, it is convenient to apply dynamic programming techniques. Let  $\Phi: [\delta, 1]\to \mathbb{R}_+$  be given by 
\begin{equation}\label{BELL} 
\Phi(x) \  =  \ \ \sup_{\textbf{x}} \  \sum_{n=0}^{\infty} \  \prod_{i=0}^{n} \ \frac{1-x_i}{x_i},
\end{equation}
where the supremum is taken over all sequences $\textbf{x}=(x_0, x_1, x_2, \dots)$ satisfying
\begin{center} $x_0=x$, \ \ \ $x_n\in [\delta, 1]$ \  \ and \ \ $x_{n+1}\ge 1-x_n+\delta$ \ \ \ for  $n=0,1,2,\dots$ \end{center}
We may call $\Phi$ the Bellman function associated with \eqref{centre4}. Its connection to the problem is described in the following statement.

\begin{prop} We have the identity
 $$\phi  =  \sup_{x\in [\delta,1]}\Phi(x).$$
\end{prop}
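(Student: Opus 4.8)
The plan is to prove the identity $\phi=\sup_{x\in[\delta,1]}\Phi(x)$ by establishing both inequalities, where $\phi$ is given in the form \eqref{centre4}. The bridge between the two sides is the observation that the forest $\mathcal{T}_{(H,L)}$ associated to a pair $(H,L)\in\Xi^\delta(\mathbb{N})$ is, by assumption, a single rooted tree with root $I_1^{(1)}$ of measure $1$, and that walking down any root-to-leaf branch produces exactly a sequence $\mathbf{x}=(x_0,x_1,x_2,\dots)$ of the type admissible in \eqref{BELL}: if $H\equiv x_i$ on a vertex $I$ of generation $i+1$, then every child $J$ of $I$ satisfies $L\equiv 1-x_i$ on $J$, and by the defining constraint \eqref{star2} of $\Lambda^\delta(\mathbb{N})$ (which forces $H\ge (1-x_i)+\delta$ wherever $L=1-x_i$) we get the value $x_{i+1}$ of $H$ on $J$ obeys $x_{i+1}\ge 1-x_i+\delta$; also $x_i\in[\delta,1]$ by \eqref{star1}. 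The key bookkeeping identity is that, because of property 4. propagated along the tree (this is exactly the computation yielding \eqref{4.Vhl}), the total measure contributed by all generation-$(n+1)$ descendants lying below a fixed root equals $\sum$ of products $\prod_{i=0}^{n}\frac{1-x_i}{x_i}$ taken over the branches — more precisely, $\lambda(\bigcup\mathcal V_{n+1})=\sum_{\text{branches}}\prod_{i=0}^{n-1}\frac{1-x_i}{x_i}$ when the root has measure $1$.

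For the inequality $\phi\le\sup_x\Phi(x)$: fix $(H,L)\in\Xi^\delta(\mathbb{N})$ with root value $x_0=H|_{I_1^{(1)}}\in[\delta,1]$. I would argue that for every $m$,
$$\sum_{j=2}^m\lambda\Big(\bigcup\mathcal V_j\Big)\le \sum_{n=0}^{m-2}\prod_{i=0}^{n}\frac{1-x_i^{(*)}}{x_i^{(*)}}$$
for some admissible sequence, or rather $\le \Phi(x_0)$, by showing that the tree sum is dominated by the "best branch". The clean way: note the recursive structure. If the root $I$ with $H\equiv x_0$ has children $J_1,\dots,J_k$ with $\sum\lambda(J_l)=\frac{1-x_0}{x_0}$ and $H\equiv x_0^{(l)}$ on $J_l$, then each subtree rooted at $J_l$ is (after rescaling $J_l$ to unit length) again an element of $\Xi^\delta(\mathbb{N})$-type data with root value $x_0^{(l)}\ge 1-x_0+\delta$; hence the total descendant-measure of the subtree at $J_l$ is $\le \lambda(J_l)\cdot(1+\sum_{j\ge 2}\dots)\le \lambda(J_l)\Phi(x_0^{(l)})$, and since $\Phi$ satisfies the Bellman inequality $\Phi(x)\ge 1+\frac{1-x}{x}\sup_{y\ge 1-x+\delta}\Phi(y)$ — in fact, I would first record that $\Phi$ satisfies the functional equation $\Phi(x)=1+\frac{1-x}{x}\sup_{y\in[1-x+\delta,1]}\Phi(y)$, which follows directly from its definition \eqref{BELL} by splitting off the $n=0$ term — we get $\sum_l\lambda(J_l)\Phi(x_0^{(l)})\le \frac{1-x_0}{x_0}\sup_{y\ge 1-x_0+\delta}\Phi(y)=\Phi(x_0)-1$, so the whole-tree descendant-measure (excluding the root) is $\le\Phi(x_0)-1\le\Phi(x_0)$. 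Summing/maximizing over the root value gives $\phi\le\sup_{x\in[\delta,1]}\Phi(x)$. Some care is needed because $\Phi$ is defined by a supremum that may not be attained and the tree is only finitely deep at each stage, but the monotone/uniform convergence already established in \eqref{centre4} (the geometric factor $\frac{1-\delta}{\delta}<1$) lets me pass to the limit cleanly.

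For the reverse inequality $\phi\ge\sup_x\Phi(x)$: given $x\in[\delta,1]$ and $\varepsilon>0$, pick a near-optimal sequence $\mathbf{x}=(x_0=x,x_1,x_2,\dots)$ for \eqref{BELL} and truncate it at some large depth $N$ (legitimate since the tail is geometrically small). Then I would explicitly build $(H,L)\in\Xi^\delta(\mathbb{N})$ realizing a single ``path-like'' tree: the root is $[0,1)$ with $H\equiv x_0$; its unique child is an interval of length $\frac{1-x_0}{x_0}$ on which $L\equiv 1-x_0$ and $H\equiv x_1$; its unique child has length $\frac{1-x_0}{x_0}\cdot\frac{1-x_1}{x_1}$ with $L\equiv 1-x_1$, $H\equiv x_2$; and so on, cutting off (setting $H=1$, no further descendants, consistent with the remark that $H=1$ vertices are leaves) after step $N$ or whenever $x_n=1$. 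One must check all four defining properties of $\Lambda^\delta(N')$ for a suitable finite $N'$: properties 1 and 2 are immediate; property 3 just bounds $k$; property 4 must be arranged by also placing, outside the tree, the ``missing'' mass needed to balance the relation $\frac{1-y}{y}(\lambda(H=y)+\lambda(L=y))=\lambda(H=1-y)+\lambda(L=1-y)$ — concretely, for each value $y=x_n$ appearing on $H$ one needs matching mass at $L=1-x_n$ (already provided by the next interval) and, for the $L$-values, matching mass at $H=1-(1-x_n)=x_n$; careful accounting shows the path construction is self-balancing except possibly at the endpoints, which can be patched by appending finitely many short balancing intervals on which $H=L=1/2$ or by a symmetric tail, without affecting $\lambda(\bigcup\mathcal V_j)$ for $j\ge 2$. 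Then $\sum_{j=2}^{N+1}\lambda(\bigcup\mathcal V_j)=\sum_{n=0}^{N-1}\prod_{i=0}^{n}\frac{1-x_i}{x_i}\ge\Phi(x)-\varepsilon$, giving $\phi\ge\Phi(x)-2\varepsilon$. Letting $\varepsilon\to0$ and taking the sup over $x$ finishes.

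The main obstacle I anticipate is property 4 in the construction for the lower bound: ensuring the global balance relation holds exactly for a finitely-supported step function while only the ``downward'' half of each balance is naturally produced by the tree. I expect this to require either a symmetrization trick (build the mirror tree on the region where $H<1/2$, i.e. use an $L$-side copy) or an explicit finite correction, and getting the disjointness of all intervals simultaneously right is the fiddly part. A secondary subtlety is justifying the exchange of the supremum over branches with the supremum defining $\Phi$ — i.e., that the ``max over branches'' bound in the upper-bound argument is tight against the single-sequence optimization in \eqref{BELL} — but this is exactly what the Bellman functional equation for $\Phi$ is designed to handle, so I would state and prove that functional equation first as a lemma.
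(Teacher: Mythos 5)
Your plan follows the same route as the paper: after the reduction to $\Xi^{\delta}(\mathbb{N})$, prune the tree to a single root-to-leaf path, read off the sequence $(x_0,x_1,\dots)$ of $H$-values along it, and identify the total descendant mass with the series defining $\Phi$ in \eqref{BELL}. Two corrections, however. First, your stated functional equation $\Phi(x)=1+\frac{1-x}{x}\sup_{y\ge 1-x+\delta}\Phi(y)$ is wrong: splitting off the $n=0$ term in \eqref{BELL} produces $\frac{1-x}{x}$, not $1$, so the correct relation is $\Phi(x)=\frac{1-x}{x}\bigl(1+\sup_{y\ge 1-x+\delta}\Phi(y)\bigr)$ as in \eqref{BELL-re}. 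Your recursive upper bound still goes through, but the correct bookkeeping is $\sum_l\lambda(J_l)\bigl(1+\Phi(x_0^{(l)})\bigr)\le\frac{1-x_0}{x_0}\bigl(1+\sup_y\Phi(y)\bigr)=\Phi(x_0)$ (not $\Phi(x_0)-1$); the paper instead gets the same reduction by the mediant inequality, replacing all children by the one with the largest descendant-to-root ratio, and the two arguments are really the same pruning step. Second, the ``main obstacle'' you anticipate in the lower bound -- property $4$ for the constructed path -- is not actually there. For a truncated single path $I_0,\dots,I_N$ with $H\equiv x_j$ and $L\equiv 1-x_{j-1}$ on $I_j$ (and $L\equiv\frac12$ on $I_0$, $H\equiv 1$ on $I_N$), property $4$ reduces for $y>\frac12$ to $\frac{1-y}{y}\lambda(H=y)=\lambda(L=1-y)$, and both sides equal $\sum_{j\le N-1:\,x_j=y}\lambda(I_{j+1})$ because $\lambda(I_{j+1})=\frac{1-x_j}{x_j}\lambda(I_j)$; the case $y\le\frac12$ is the same identity. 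So no mirror tree or external balancing mass is needed -- the path is self-balancing. The paper's own proof of this proposition is very terse (it only says ``analogously'' and then identifies the path sum with $\Phi$); your write-up fills in the same gaps, so once the two points above are fixed it is a sound version of the paper's argument rather than a genuinely different one.
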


\begin{proof} Fix $m\in \{1,2,\dots\}$. Analogously to the reduction  $\mathcal{T}_{(H,L)}=\mathcal{T}_{(H,L)}^1$, we easily verify that it is enough to handle $(H,L)$ with $\mathcal{V}_j = \{I_1^{(j)}\}$ for $1\le j \le m$.  Passing with $m$ to infinity, just as in (\ref{centre4}), we get
\begin{equation} \label{centre5} \phi \ \ = \ \  \sup_{\Xi^{\delta}(\mathbb{N})} \ \sum_{j=2}^{\infty} \ \lambda(I_{1}^{(j)}),   \end{equation}
where $I_{1}^{(n+1)}$ is generated by $I_{1}^{(n)}$ for each $n\ge 1$. Recall from construction that $H$ is constant on such intervals: denote $H=x_{n-1}$ on $I_1^{(n)}$, $n=1,2,\dots$ Note that inequality $x_{n+1}\ge 1-x_n+\delta$ is a straightforward consequence of $(H,L)\in \Lambda^{\delta}(\mathbb{N})$. Lastly, let $\Phi(x)$ denote the right-hand side of (\ref{centre5}) with an additional restriction to $x_0=x$. This yields the claim.
\end{proof}

We turn our attention to the identification of the formula for $\Phi$. We start with a structural property of the Bellman function.

\begin{prop}
 For any $x\in [\delta,1]$ we have the recurrence relation
\begin{equation}\label{BELL-re}\Phi(x) \ = \ \frac{1-x}{x}\Big(1+\sup_{y\ge 1-x+\delta} \Phi(y)  \Big).\end{equation}
\end{prop}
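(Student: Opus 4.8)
The plan is to prove the recurrence by establishing the two inequalities $\Phi(x) \le \frac{1-x}{x}\bigl(1+\sup_{y\ge 1-x+\delta}\Phi(y)\bigr)$ and $\Phi(x) \ge \frac{1-x}{x}\bigl(1+\sup_{y\ge 1-x+\delta}\Phi(y)\bigr)$ separately, exploiting the self-similar structure of the supremum in \eqref{BELL}. The main idea is that a sequence $\mathbf{x}=(x_0,x_1,x_2,\dots)$ admissible for the defining problem of $\Phi(x_0)$, with $x_0=x$, factors after its first term into a sequence $\mathbf{x}'=(x_1,x_2,\dots)$ which, after re-indexing, is exactly an admissible sequence for the problem defining $\Phi(x_1)$ — the only constraint linking them is $x_1\ge 1-x_0+\delta = 1-x+\delta$.

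First I would prove ``$\le$''. Fix an admissible $\mathbf{x}=(x_0,x_1,\dots)$ with $x_0=x$. Splitting off the $n=0$ term in the series and pulling out the common factor $\frac{1-x_0}{x_0}=\frac{1-x}{x}$ gives
\begin{equation*}
\sum_{n=0}^\infty \prod_{i=0}^n \frac{1-x_i}{x_i} \ = \ \frac{1-x}{x}\left(1 + \sum_{n=1}^\infty \prod_{i=1}^n \frac{1-x_i}{x_i}\right) \ = \ \frac{1-x}{x}\left(1 + \sum_{m=0}^\infty \prod_{i=0}^m \frac{1-x_{i+1}}{x_{i+1}}\right).
\end{equation*}
The shifted sequence $(x_1,x_2,\dots)$ satisfies $x_{i+1}\in[\delta,1]$ and $x_{i+2}\ge 1-x_{i+1}+\delta$ for all $i\ge 0$, hence is admissible for the problem defining $\Phi(x_1)$; since moreover $x_1\ge 1-x+\delta$, the inner series is at most $\sup_{y\ge 1-x+\delta}\Phi(y)$. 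Taking the supremum over all admissible $\mathbf{x}$ yields the bound. (One should note the series converges: as in the discussion before \eqref{centre4}, consecutive factors are at most $\frac{1-\delta}{\delta}<1$ since $\delta>\tfrac12$, so $\Phi$ is finite and all manipulations are legitimate.)

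Next I would prove ``$\ge$''. Fix any $y_0\ge 1-x+\delta$ and any sequence $(y_0,y_1,y_2,\dots)$ admissible for $\Phi(y_0)$. Prepend $x$ to obtain $\mathbf{x}=(x,y_0,y_1,\dots)$; this is admissible for $\Phi(x)$ because $x\in[\delta,1]$ and the single new constraint $y_0\ge 1-x+\delta$ holds by assumption. Running the same factorization in reverse,
\begin{equation*}
\Phi(x) \ \ge \ \sum_{n=0}^\infty \prod_{i=0}^n \frac{1-x_i}{x_i} \ = \ \frac{1-x}{x}\left(1 + \sum_{m=0}^\infty \prod_{i=0}^m \frac{1-y_i}{y_i}\right).
\end{equation*}
Taking the supremum first over admissible sequences $(y_0,y_1,\dots)$ gives $\Phi(x)\ge \frac{1-x}{x}(1+\Phi(y_0))$, and then the supremum over $y_0\ge 1-x+\delta$ gives the claim. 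Since both inequalities hold, \eqref{BELL-re} follows. I do not expect any serious obstacle here: the only points requiring care are the absolute convergence of the series (so that re-indexing and the split-off-first-term step are valid) and checking that the supremum $\sup_{y\ge 1-x+\delta}\Phi(y)$ is over a nonempty set — which it is, since $1-x+\delta\le 1$ always holds for $x\ge\delta$, so $y=1$ is admissible.
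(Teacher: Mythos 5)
Your proposal is correct and follows essentially the same route as the paper's proof: both split off the $n=0$ term, factor out $\frac{1-x}{x}$, and observe that the tail supremum over admissible $(x_1,x_2,\dots)$ with $x_1\ge 1-x+\delta$ equals $\sup_{y\ge 1-x+\delta}\Phi(y)$. You have merely made explicit the two inequalities and the convergence/re-indexing checks that the paper's terse ``optimality principle'' argument leaves implicit.
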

\begin{proof}
The argument rests on the so-called optimality principle. By (\ref{BELL}), we simply have
$$ \Phi(x) \  = \ \ \frac{1-x}{x}\cdot \Big(1 \ + \ \sup_{\widetilde{\textbf{x}}}\sum_{n=1}^{\infty} \ \prod_{i=1}^{n} \ \frac{1-x_i}{x_i} \Big),$$
 where supremum is taken over all sequences $\widetilde{\textbf{x}}=(x_1, x_2, \dots)$ such that 
 $$ x_1\ge 1-x+\delta,\qquad x_n\in [\delta, 1]\quad \mbox{ and }\quad x_{n+1}\ge 1-x_n+\delta\quad \mbox{  for  }n=1,2,3,\dots.  \qedhere $$
 \end{proof}

Now we will make use of the following procedure, which is often successful in the treatment of various problems in dynamic programming. Namely, based on some experimentation, we will \emph{guess} for which choice of $\textbf{x}$ the supremum defining $\Phi(x)$ is attained, thus obtaining ``a candidate'' $\Psi$ for the Bellman function. By the very definition, this candidate must satisfy $\Psi\leq \Phi$. The reverse estimate will be obtained by the verification that the candidate also satisfies the structural requirement \eqref{BELL-re}, and exploiting this condition appropriately.

 We proceed to the choice of $\textbf{x}$. A little thought and a closer inspection suggests that  problem  (\ref{BELL}) should be  maximized by an alternating sequence
 $$\hat{\textbf{x}} \ = \ (x,  1-x+\delta,  x,   1-x+\delta,   x, \ \dots).$$
 Indeed, this is quite a natural guess: we come up with $\hat{\textbf{x}}$ simply by assuming equalities in the contraints for the coordinates $x_0$, $x_1$, $x_2$, $\ldots$.  
 Plugging this sequence  into (\ref{BELL}), we compute the corresponding candidate for $\Phi(x)$, obtaining
 $$\Psi(x) := \ \  \frac{1-x}{x} \ + \ \frac{1-x}{x}\cdot \frac{x-\delta}{1-x+\delta} \ + \  \frac{1-x}{x}\cdot \frac{x-\delta}{1-x+\delta}\cdot \frac{1-x}{x} \ +\dots \ = \ \frac{1-x}{\delta},$$
 for all $x\in [\delta,1]$. Then $\Psi\leq \Phi$, as we have already commented above, so the proof will be complete if we manage to check that $\Psi \geq \Phi$.
 
\begin{proof}[Proof of Lemma \ref{centre}]  First, we show that $\Psi$ fulfills the recurrence (\ref{BELL-re}). Indeed, for $x\in [\delta,1]$, we have
 \begin{align*} \frac{1-x}{x}\Big(1+\sup_{y\ge 1-x+\delta} \Psi(y)  \Big)   &=   \frac{1-x}{x}\Big(1+ \Psi(1-x+\delta)  \Big) = \frac{1-x}{x}\Big(1+\frac{x-\delta}{\delta}  \Big) \ = \ \Psi(x). \end{align*}
 Pick any $x\in [\delta, 1]$ and $\varepsilon>0$. By (\ref{BELL}), we can choose an admissible  sequence $\textbf{x}_{\varepsilon}=(x_0, x_1, \dots)$ (i.e., satisfying $x_0=x$ and $x_{n+1}\ge 1-x_n+\delta$, $n=0,1,2,\dots$) such that
$$ \Phi(x) \  \le   \ \varepsilon \ + \ \sum_{n=0}^{\infty} \  \prod_{i=0}^{n} \ \frac{1-x_i}{x_i}. $$
 Since $\frac{1-x_i}{x_i} \le \frac{1-\delta}{\delta}$, $i=1,2,\dots$, there is a natural number $m$ for which
  \begin{equation} \label{APROX} \Phi(x) \  \le   \ 2\varepsilon \ + \ \sum_{n=0}^{m} \  \prod_{i=0}^{n} \ \frac{1-x_i}{x_i}. \end{equation}
 On the other hand, by recurrence relation (\ref{BELL-re}), we can write
 \begin{align*}
\Psi(x) = \Psi(x_0) \ge \frac{1-x_0}{x_0}\Big(1+ \Psi(x_1)  \Big)&=\frac{1-x_0}{x_0}+\frac{1-x_0}{x_0}\Psi(x_1)\\
  &\ge \frac{1-x_0}{x_0}+ \frac{1-x_0}{x_0} \frac{1-x_1}{x_1}\Big(1+ \Psi(x_2)  \Big)  \\
&= \frac{1-x_0}{x_0}+ \frac{1-x_0}{x_0} \frac{1-x_1}{x_1}+ \frac{1-x_0}{x_0} \frac{1-x_1}{x_1}\Psi(x_2)  
\end{align*}  
and so on. After $m$ steps, we obtain
$$ \Psi(x)\ge \sum_{n=0}^{m} \  \prod_{i=0}^{n} \ \frac{1-x_i}{x_i}+\left(\prod_{i=0}^{m} \ \frac{1-x_i}{x_i}\right)\Psi(x_{m+1})\ge \sum_{n=0}^{m} \  \prod_{i=0}^{n} \ \frac{1-x_i}{x_i}.$$
Hence, by (\ref{APROX}), we get $\Psi(x) + 2\varepsilon  \ \ge \ \Phi(x)$, and since  $\varepsilon > 0$ was chosen arbitrarily, the reverse bound $\Psi\geq \Phi$ follows. This proves the claim and completes the proof of \eqref{EqN}: $\phi  =  \sup_{x\in [\delta,1]}\Phi(x)=(1-\delta)/\delta.$
\end{proof}


\setlength{\baselineskip}{2ex}

\bibliographystyle{plain}
\bibliography{CPbib}

\end{document}